\DeclareMathAlphabet{\mathpzc}{OT1}{pzc}{m}{it}
\newcommand{\bis}{\mathscr{B}}
\newcommand{\R}{\mathbb{R}}
\newcommand{\calA}{\mathcal{A}}
\newcommand{\calG}{\mathcal{G}}
\newcommand{\calU}{\mathcal{U}}
\newcommand{\calY}{\mathcal{Y}}
\newcommand{\calX}{\mathcal{X}}
\newcommand{\dd}{{\mathrm{d}}}  %differenziale
\newcommand{\PsM}{\mathrm{P}\sigma\mathrm{M}}
\newcommand{\target}{t}
\newcommand{\Lie}[2]{{\left[{\,{#1}\,,\,{#2}\,}\right]}} %bracket di Lie
\newcommand{\braket}[2]{\left\langle{\,{#1}\,,\,{#2}\,}\right\rangle} %paring canonico
\newcommand{\vecc}[1]{{\stackrel{\rightarrow}{#1}}}
\newcommand{\lie}[2]{[#1,#2]}
\newtheorem{Thm}{Theorem}[section]
\newtheorem{Prop}[Thm]{Proposition}
\newtheorem{Lem}[Thm]{Lemma}
\newtheorem{Cor}[Thm]{Corollary}
\newtheorem*{Thm*}{Theorem}
\newtheorem*{Lem*}{Lemma}
\newtheorem{Def}[Thm]{Definition}
\theoremstyle{remark}
\newtheorem{Rem}[Thm]{Remark}
\newtheorem*{Ack}{Acknowledgment}
\newtheorem*{Rem*}{Remark}
\theoremstyle{definition}
\begin{document}
\title{On Nonlinear Gauge Theories}

\author[D.~Signori]{Daniele Signori}
\author[M.~Sti\'enon]{Mathieu Sti\'enon}

\address{Department of Mathematics, Pennsylvania State University, University Park, PA 16802, United States}
\email{signori@math.psu.edu stienon@math.psu.edu}

%%%%%%%%%%%%%%%%%%%%%%%%%%
%%%%%%%%%%%%%%%%%%%%%%%%%%

\begin{abstract}
%In this note we show how the class of nonlinear gauge theories with Lie groupoid symmetries admits a formulation in terms of principal connections (a la \moerdijk) on a principal groupoid bundle (PGB). Gauge transformations are induced by change of local charts (i.e trivializations or \emph{gauges}) in the PGB. The curvature is suitably defined as a map of complexes and its gauge invariance proven. When the bundle is a trivial principal bundle, this reduces to what is usually referred to as non-linear gauge theory in the literature. 
In this note, we study non-linear gauge theories for principal bundles, where the structure group is replaced by a Lie groupoid. We follow the approach of 
\moerdijk and establish its relation with the existing physics literature. In particular, we derive a new formula for the gauge transformation which closely resembles and generalizes the classical formulas found in Yang Mills gauge theories.
\end{abstract}
\maketitle

%%%%%%%%%%%%%%%%%%%%%%%%%%
%%%%%%%%%%%%%%%%%%%%%%%%%%

\section{introduction}
Differential geometry of principal bundles is the natural language of gauge theories of Yang-Mills type. Since the late 50's this recognition spawned many fruitful ideas in both mathematical and physical sciences, ranging from the discovery of new topological invariants to the formulation of the Standard Model. The object of the present study are principal groupoid bundles, where the structure Lie group is replaced in a suitable way by a Lie groupoid. 
%Our interests arise from manly two topics: nonlinear gauge theories \cite{Schoutens:1991} and the integration problem for Lie algebroids \cite{crainic.fernandes:2003, cattaneo.felder:2001c}. 

A nonlinear gauge theory is a model where the symmetry content is described by field dependent quantities which replace the Lie algebra structure constants. The language of Lie algebroids and groupoids provides a natural setting for this generalization \cite{Strobl:2004}. An example is the Poisson Sigma Model ($\PsM$), a two dimensional topological field theory whose path integral quantization  \cite{cattaneo.felder:2000a} yields Kontsevich's formula for deformation quantization \cite{kontsevich:2003a}. 
Let $\Sigma$ be a two dimensional surface (bounded, oriented), $(M,\pi)$ a Poisson manifold: the $\PsM$ action is a functional on the space of vector bundle morphisms $T\Sigma \to T^*M$. 
Such maps decompose into pairs $(X,\eta)$, where $X$ is a map between the basis and $\eta \in \Omega^1(\Sigma, X^*T^*M)$.
%, the action functional is defined by:
%\begin{equation}\label{PsM-action}
%S_{\PsM}(X, \eta) = \int_{\Sigma} \braket {\eta}{\dd X} + \frac 12 \braket{\eta}{\pi(X)\eta}
%\end{equation}
%with $\dd X$ thought of as an element of $\Omega^1(\Sigma, X^*TM)$, while $\braket{}{}$ indicates the pointwise pairing between $TM$ and $T^*M$ together with integration over $M$. 
The equations of motion for this classical theory are the conditions defining Lie algebroid morphisms from the standard Lie algebroid $T\Sigma$ to the cotangent Lie algebroid $T^*M$, whose Lie algebroid structure is defined by $\pi$. 
%We show that this picture can be interpreted in terms of the vanishing of the curvature of a connection on a principal groupoid bundle. In particular, the Moerdijk condition (needed to define equivariance properties of the connection, see \ref{subsec.Moerdijk}) corresponds to the Gauss constraint (or anchor condition) of the Hamiltonian formulation:
%\begin{equation}\label{eq.Gauss}
%\dd X + \pi(X)\eta = 0 \;.
%\end{equation}
%The integration problem for Lie algebroids as been solved with a constructive approach: in \cite{crainic.fernandes:2003} the authors consider a particular class of paths in the Lie algebroid and show that their homotopy space carries a topological groupoid structure (a Lie groupoid structure when certain obstructions vanish). On the other hand, given a Lie algebra $\frg$, the connected simply connected Lie group $G$ corresponding to it is obtained as orbit space of the paths $P(\frg)$ with respect to the an infinitesimal action of $P(\frg)$, this time seen as a Lie algebra. When this action is integrated to an action of $P(G)$, this construction can be interpreted as the moduli space of flat connections of the trivial principal bundle $M \times G$. Analogously, the construction of the moduli space of flat connections on a principal groupoid bundle is related to is related to the integration of Lie algebroids.

Such a Poisson $\sigma$-model leads naturally to a construction of symplectic groupoid of $M$ as shown by Cattaneo-Felder \cite{cattaneo.felder:2001c}.
For an arbitray Lie algebroid $A$, a similar construction leads to the Lie groupoid integrating $A$, which was extensively studied by Crainic-Fernandes leading to the solution of the problem of Lie's third theorem for Lie algebroids \cite{crainic.fernandes:2003}.

On the other hand, for a principal groupoid bundle, there is a natural notion of connections, namely horizontal distributions. Indeed, Moerdijk-Mr\v{c}un  \cite{moerdijk.mrcun:2003a}introduced a definition of connections which reduces to the standard connections for usual princial bundles (with structure group being a Lie group).

The purpose of this paper is to establish relations between the approach of Moerdijk-Mr\v{c}un and the one commonly adopted in non-linear gauge theory. In particular, we give an intepreation of gauge transformations in non-linear gauge theory in terms of principal groupoid bundles, and derive a new 	formula resembling the classical formula for the effect of gauge transformations on gauge fields \cite{bonechi.zabzine:2005a, Ikeda:1994, schaller.strobl:1994a, cattaneo.felder:2001c}. 

The paper is organized as follows.
In Section~\ref{sec.PGB}, we recall the notion of principal bundle $P \to M$ with structure groupoid $G\toto G_{0}$. 
Principal connections on a PGB are introduced in Section~\ref{sec.Connections}, as horizontal distributions $H$ on the fiber bundle $P$ satisfying suitable equivariance properties. 
%We introduce connection forms using the division map of the PGB and discuss the \moerdijk condition. 
Connections are then characterized as equivariant bundle maps $TP\to A$ satisfying the \moerdijk condition. Their curvature measures how far the corresponding bundle maps are from being Lie algebroid morphisms.
%A local connection form $\theta$ is bundle map from $TM$ to the Lie algebroid $A$ of $G$ satisfying a first class constraint (generalizing Equation~\eqref{eq.Gauss}) which is the infinitesimal version of the \moerdijk condition. 
Next, suitable gauge transformations are introduced.
Finally, we discuss the case of trivial principal bundles.
%Gauge transformations are first postulated assuming the gauge group to be $\calG = C^{\infty}(M, \bis(G))$ and then derived, in the so called ``passive viewpoint'', from change of local trivializations. The alternative approach via vertical automorphisms of the PGB is not equivalent due to the strong requirements for PGB automorphisms. 
%This fact marks a difference from linear gauge theories where the two methods coincide. The curvature of the connection is defined in terms of a chain map $F_{\theta} = \theta^{*}\circ \dd_{A} - \dd \circ \theta^{*}$ from $\Omega(\wedge^{\bullet} A^{*})$ to $\Omega^{\bullet+1}(M)$. 
%We show that $F_{\theta}$ it is the dual of the Ehresmann curvature of $H$  and its gauge invariance is established. We conclude with some comments on moduli spaces and the choice of the gauge group.

%*************

\begin{Ack}
The authors are grateful to Martin Bojowald and Ping Xu for sharing their insight and useful comments. 
The authors acknowledge the hospitality of Institut Henri Poincar\'e, ETH Z\"urich and Universit\"at Z\"urich, where part of this work was done.
This research was partially supported by the European Union through the FP6 Marie Curie RTN ENIGMA 
(contract number MRTN-CT-2004-5652).
\end{Ack}

%%%%%%%%%%%%%%%%%%%%%%%%%%
%%%%%%%%%%%%%%%%%%%%%%%%%%

\section{Principal groupoid bundles}\label{sec.PGB}

In this section we introduce principal groupoid bundles and fix the notation, we mainly follow \cite{moerdijk.mrcun:2003a,rossi:2004b}. For basic material on Lie groupoids and algebroids we refer to \cite{mackenzie:2005a, cannasdasilva.weinstein:1999a}. A very detailed account on principal groupoid bundles (PGB) together with a comprehensive bibliography can be found in \cite{rossi:2004b}. 

\subsection{Principal groupoid bundles}

For a groupoid $G$, $G_{0}$ denotes its set of objects, $s,t: G \to G_0$ are the \emph{source} and \emph{target} maps, $1:G_0\to G$ is the unit map, the composition of arrows is defined for {\emph{composable pairs},} i.e. $(g,h) \in G\times G$ such that $t(g) = s(h)$. The group of local bisections is denoted by $\bis(G)$. Given $\gamma \in \bis(G)$ a \emph{left (right) translation} is indicated with $L_\gamma$ ($R_{\gamma}$). For source ($s$) and target ($t$) fibers we use the notations $G_x := s^{-1}(x)$ and $G^y:= t^{-1}(y)$.

A right action of a groupoid $G$ on a manifold $E$ along the moment map $\epsilon:E \to G_0$ is a smooth map:
\[ \mu: E{_{\epsilon}\times_{s}}G\to E 
:(p,g) \mapsto pg \; ,\]
such that, whenever defined, the following identities hold: $\epsilon(pg) = t(g)$, $p \cdot 1_{\epsilon(p)} = p$ and $p(gh) = (pg)h$.
Here, $E{_{\epsilon}\times_{s}}G$ denotes the fibered product $\{(p,g) \in E\times G| \epsilon(p) = s(g)\}$.

The Lie algebroid $(A,\Lie{}{})$ of the Lie groupoid $G$ has anchor map $\rho = t_{*}$.

\begin{Def}
A \emph{principal groupoid bundle} $P$ over a manifold $M$ is a smooth fiber bundle $\pi:P\to M$ equipped with a smooth right action $\mu$ along $\epsilon:P \to G_{0}$ which is fiberwise, 
such that the map:
\[ (\pr_1,\mu): P{_{\epsilon}\times_{s}}G \to 
P{_{\pi}\times_{\pi}} P : (p,g) \mapsto (p, pg) \; ,\]
is a diffeomorphism.
\end{Def} 
A principal groupoid bundle 
\begin{equation*} 
\xymatrix{ P \ar[dr]^{\epsilon} \ar[d]_{\pi} 
& G \ar@<2pt>[d]\ar@<-2pt>[d] \\ M & G_0 } 
\end{equation*}
is sometimes referred to as $(M\xfrom{\pi}P\circlearrowleft_{\epsilon} G)$.

\subsection{Division map}

The projection onto $G$ of the inverse map of $(\pr_1,\mu)$ is called \emph{division map}, $\delta_{P}$.
\[ \delta_{P}: P{_{\pi}\times_{\pi}}P \to G \; .\]
In words, the right multiplication by the arrow $\delta_{P}(p,q)$ sends $p$ to $q$: 
\[ p\cdot\delta_{P}(p,q)=q .\] 
Some properties:

\begin{Prop}\label{prop.DivisionMap}
For all $(p,q) \in P {_{\pi}\times_{\pi}} P$, the division map $\delta_{P}$  satisfies the following properties:
\begin{enumerate}
\item $\delta_{P}(p,p) = 1_{\epsilon(p)}$;
\item $(s \circ \delta_{P})(p,q) = \epsilon(p)$, \, $(t \circ \delta_{P})(p,q) = \epsilon(q)$;
\item $\delta_{P}(p,q)^{-1} = \delta_{P}(q,p)$;
\item $\delta_{P}(p,qg) = \delta_{P}(p,q)g$;
\end{enumerate}
\end{Prop}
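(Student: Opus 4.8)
The plan is to reduce all four identities to the single \emph{characterizing property} of the division map together with the uniqueness built into the principal bundle axiom. Since $(\pr_1,\mu)$ is a bijection, for every $(p,q)\in P{_{\pi}\times_{\pi}}P$ the arrow $\delta_{P}(p,q)$ is the \emph{unique} $g\in G$ satisfying $s(g)=\epsilon(p)$ and $pg=q$. Thus the whole proof reduces to a recurring two-step recipe: for each identity, exhibit an explicit candidate arrow, verify that its source equals the appropriate base point and that acting by it carries the first argument to the second, and then conclude equality by uniqueness. The source condition $s(\delta_{P}(p,q))=\epsilon(p)$ in (2) needs no argument at all: it merely records that the inverse of $(\pr_1,\mu)$ lands in the fibered product $P{_{\epsilon}\times_{s}}G$; and the target half follows by applying $\epsilon(pg)=t(g)$ to $g=\delta_{P}(p,q)$, since $\epsilon(q)=\epsilon(p\,\delta_{P}(p,q))=t(\delta_{P}(p,q))$.

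For (1), the unit axiom $p\cdot 1_{\epsilon(p)}=p$ together with $s(1_{\epsilon(p)})=\epsilon(p)$ exhibits $1_{\epsilon(p)}$ as the unique arrow sending $p$ to $p$, so $\delta_{P}(p,p)=1_{\epsilon(p)}$. For (3), I would set $g=\delta_{P}(p,q)$ and test the candidate $g^{-1}$: its source is $s(g^{-1})=t(g)=\epsilon(q)$ by (2), so it is composable with $q$; and using associativity of the action together with $g\,g^{-1}=1_{s(g)}=1_{\epsilon(p)}$ one gets $q\,g^{-1}=(pg)g^{-1}=p\,(g\,g^{-1})=p\cdot 1_{\epsilon(p)}=p$. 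Uniqueness then forces $g^{-1}=\delta_{P}(q,p)$.

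Part (4) runs on the same idea. Because the action is fiberwise we have $\pi(qg)=\pi(q)=\pi(p)$, so $(p,qg)\in P{_{\pi}\times_{\pi}}P$ and $\delta_{P}(p,qg)$ is defined. Writing $h=\delta_{P}(p,q)$, part (2) gives $t(h)=\epsilon(q)=s(g)$, so $hg$ is composable, with $s(hg)=s(h)=\epsilon(p)$, and associativity yields $p\,(hg)=(ph)g=qg$. Hence $hg$ is the unique arrow carrying $p$ to $qg$, i.e.\ $\delta_{P}(p,qg)=\delta_{P}(p,q)g$. None of these steps is genuinely hard; the one point demanding care, and the only place the argument could go wrong, is the bookkeeping of composability and of the fibered-product conditions---precisely why (2) must be established first and then fed into (3) and (4) to guarantee that the arrows $g^{-1}$ and $hg$ are legitimately defined and act where claimed.
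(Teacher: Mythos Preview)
Your argument is correct: the uniqueness of $g$ with $s(g)=\epsilon(p)$ and $pg=q$, coming from the bijectivity of $(\pr_1,\mu)$, is precisely the right tool, and you invoke the action axioms and composability bookkeeping carefully and in the right order. The paper itself states this proposition without proof (it lists the properties and moves on), so there is nothing to compare against; your write-up simply supplies the elementary verification that the authors omitted.
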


When there is no risk of confusion we write $\delta$ for $\delta_{P}$.

\subsection{Trivializations}\label{sec.Trivializations}

A groupoid $G$ is itself a principal groupoid bundle 
$(G_0\xfrom{s}G\circlearrowleft_{t} G)$, the right action is given by the groupoid multiplication. It is usually denoted with $\calU_{G}$ and called \emph{unit bundle of the Lie groupoid $G$}.
The division map $\delta_{\calU_{G}}$ is given by:
\[ G {_{s}\times_{t}} G \ni (g,h) \mapsto g^{-1}h \in G \; ,\]
hence the terminology.

%Triviality is defined in terms of pullbacks of $\calU_{G}$. Given $M\overset{f}\to G_{0}$, the pullback bundle $f^{*}\calU_{G}$ is called the \emph{trivial $G$-bundle over M w.r.t. $f$}.
We define the \emph{trivial $G$-bundle over M w.r.t. a given map $M\overset{f}\to G_{0}$} as the pullback bundle 
\[ f^{*}\calU_{G} = \{ (m,g) \in M\times G | f(m)=s(g) \} \] endowed with the momentum map 
\[ \epsilon:f^{*}\calU_{G}\to G:(m,g)\mapsto t(g)\] and the action 
\[ \mu:f^{*}\calU_{G} {_\epsilon\times_s} G :\big((m,g_1),g_2\big)\mapsto (m,g_1 g_2) .\]

\begin{Prop}\label{LocalTrivializations}
Every principal $G$-bundle $(M\xfrom{\pi}P\circlearrowleft_{\epsilon} G)$ is locally diffeomorphic to a trivial bundle.
\end{Prop}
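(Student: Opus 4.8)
The plan is to trivialize $P$ over a small open set by choosing a local section and transporting the groupoid structure through the division map. First I would use that $\pi\colon P\to M$ is a fiber bundle, hence a surjective submersion, to obtain for each point of $M$ an open neighbourhood $U$ together with a smooth local section $\sigma\colon U\to P$ of $\pi$. Setting $f:=\epsilon\circ\sigma\colon U\to G_{0}$, I claim that $P|_{U}:=\pi^{-1}(U)$ is diffeomorphic, as a principal $G$-bundle, to the trivial bundle $f^{*}\calU_{G}$ associated with this $f$.

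The diffeomorphism I would write down is
\[ \Phi\colon P|_{U}\to f^{*}\calU_{G}\colon p\mapsto\bigl(\pi(p),\,\delta_{P}(\sigma(\pi(p)),p)\bigr). \]
That $\Phi$ takes values in $f^{*}\calU_{G}$ follows from Proposition~\ref{prop.DivisionMap}(2): indeed $s\bigl(\delta_{P}(\sigma(\pi(p)),p)\bigr)=\epsilon(\sigma(\pi(p)))=f(\pi(p))$. Smoothness of $\Phi$ is immediate since $\pi$, $\sigma$ and $\delta_{P}$ are smooth. For the inverse I would use the action, setting $\Psi(m,g):=\sigma(m)\cdot g$; this is well defined because $s(g)=f(m)=\epsilon(\sigma(m))$, and it lands in $P|_{U}$ because the action is fiberwise, so $\pi(\sigma(m)\cdot g)=m$. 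The two composites then reduce to the defining identity $p'\cdot\delta_{P}(p',q)=q$ and to Proposition~\ref{prop.DivisionMap}(1),(4): on one hand $\Psi(\Phi(p))=\sigma(\pi(p))\cdot\delta_{P}(\sigma(\pi(p)),p)=p$, and on the other hand $\delta_{P}(\sigma(m),\sigma(m)\cdot g)=\delta_{P}(\sigma(m),\sigma(m))\,g=1_{\epsilon(\sigma(m))}g=g$, so $\Phi(\Psi(m,g))=(m,g)$.

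Finally I would check that $\Phi$ is genuinely a morphism of principal $G$-bundles, i.e.\ that it intertwines moment maps and actions. Compatibility of moment maps is again Proposition~\ref{prop.DivisionMap}(2): the moment map of $f^{*}\calU_{G}$ sends $(m,g)$ to $t(g)$, and $t\bigl(\delta_{P}(\sigma(\pi(p)),p)\bigr)=\epsilon(p)$. Equivariance follows from Proposition~\ref{prop.DivisionMap}(4) together with the fact that the action preserves $\pi$-fibers: for a composable arrow $h$,
\[ \Phi(p\cdot h)=\bigl(\pi(p),\,\delta_{P}(\sigma(\pi(p)),p)\,h\bigr)=\Phi(p)\cdot h. \]

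I expect no deep obstacle here. The only point requiring genuine care is the existence of the local section $\sigma$, which rests on $\pi$ being a fiber bundle (hence a surjective submersion admitting local sections), and the smoothness of the maps built from $\delta_{P}$ and $\mu$, which in turn follows from $\delta_{P}$ being the $G$-component of the inverse of the diffeomorphism $(\pr_{1},\mu)$. Everything else is a formal consequence of the algebraic properties of the division map collected in Proposition~\ref{prop.DivisionMap}.
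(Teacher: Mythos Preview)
Your argument is correct. The paper does not actually supply a proof of this proposition; it is stated as a standard fact and attributed implicitly to the references \cite{moerdijk.mrcun:2003a,rossi:2004b}. What the paper does, in the Remark immediately following and again explicitly in Section~\ref{subsec.Moerdijk}, is use the trivialization
\[ \Phi_{i}\colon \epsilon_{i}^{*}\calU_{G}\to P|_{U}\colon (m,g)\mapsto \sigma_{i}(m)\cdot g, \]
which is precisely your map $\Psi$. So your construction is exactly the one the paper has in mind, and your verification that $\Phi$ and $\Psi$ are mutual inverses and that $\Phi$ is $G$-equivariant and moment-map preserving, all via Proposition~\ref{prop.DivisionMap}, fills in the details the paper omits. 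Nothing further is needed.
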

%\begin{proof}
%Given $m \in M$, choose $U_{i} \subset M$ such that $m \in U_{i}$ and there exists a section $\sigma_{i}:U_{i} \to P$ of $\pi$. Let $\epsilon_{i} := \epsilon \circ \sigma_{i}$. Then the map 
%\begin{equation*}
%\Phi_{i}:\epsilon_{i}^{*}\calU_{G} \to \pi^{-1}(U_{i}):
%(m,g) \mapsto \sigma_{i}(m)g \;,
%\end{equation*}
%is a diffeomorphism with inverse 
%\[ \pi^{-1}(U_{i}) \to \epsilon_i^*\calU_{G} : p \mapsto (\pi(p), \delta_{P}(\sigma_{i}(\pi(p)), p)) .\] 
%\end{proof}

\begin{Rem}
Contrary to the classical case, for a principal groupoid bundle a trivialization includes as a priori data a (local) section $\sigma_{i}$, which is used to specify the map $\epsilon_{i}$ along which the unit bundle $\calU_{G}$ is pulled back. Indeed, a section completely defines a trivialization.
\end{Rem}

\subsection{Transition maps}

Let $U_{i}, U_{j} \subset M$ be open subsets with nonempty intersection $U_{ij} = U_{i} \cap U_{j}$. Let $\Phi_{i}$ and $\Phi_{j}$ be the local trivializations corresponding to the sections $U_{i} \overset{\sigma_{i}}\to P$ and $U_{j}\overset{\sigma_{j}}\to P$. Then $\Phi_{ji}:= \Phi_{j}^{-1} \circ \Phi_{i}$:
\begin{equation*}
\Phi_{ji}: (\epsilon^{*}_{i} \calU_{G})_{|{U_{ij}}} \to (\epsilon^{*}_{j} \calU_{G})_{|{U_{ij}}} : 
(m,g) \mapsto (m, \delta_{P}(\sigma_{j}(m), \sigma_{i}(m))g) \;,\end{equation*}
is a diffeomorphism. 

The \emph{transition map} from $U_{i}$ to $U_{j}$ is the map:
\begin{equation*}
\varphi_{ji}: U_{ij} \to G :
m \mapsto \delta_{P}(\sigma_{j}(m), \sigma_{i}(m)) 
\; .\end{equation*}
For an open cover $\{U_{i}\}$ of $M$, $\{\varphi_{i}\}$ is a non abelian 1-cocycle, i.e. it satisfies the identity $\varphi_{ij} \varphi_{jk} = \varphi_{ik}$, on the triple intersection $U_{ijk}$. It is a standard result that PGBs over $M$ are classified up to isomorphism by the non abelian cohomology group $H^{1}(M,G)$.

Given a section $U_{j} \overset{\sigma_{j}} \to P$ and the transition map $\varphi_{ji}$, the section 
$U_{ij}\xto{\sigma_{i}} P$ is given for all $m$ in $U_{ij}$ by:
\[ 
\sigma_{i}(m) = \sigma_{j}(m) \varphi_{ji}(m) \; .
\]

%Directly form the properties of the division map the following proposition holds:
%\begin{Prop}
%Let $U_{i}, U_{j}, U_{k} \subset M$ be open set with nonempty intersetcion. Let $\phi_{i}, \phi_{j}$ and $\phi_{k}$ be the local trivializations. Then:
%\begin{itemize}
%\item[i)] $s \circ \varphi_{ji} = \epsilon_{i}$, $t \circ \varphi_{ji} = \epsilon_{j}$, $\varphi_{ii} = \id_{G} \circ \epsilon_{i}$;
%\item[ii)] $\varphi^{-1}_{ji} = \varphi_{ij}$;
%\item[iii)] For all $m \in U_{i} \cap U_{j} \cap U_{k}$ the \emph{cocycle identity} holds: 
%\[
%\varphi_{ki}(m) = \varphi_{kj}(m) \varphi_{ji}(m) \;.s
%\]
%\end{itemize}
%\end{Prop}

%%%%%%%%%%%%%%%%%%%%%%%%%%
%%%%%%%%%%%%%%%%%%%%%%%%%%

\section{Horizontal distribution on principal groupoid bundles}\label{sec.Connections}

\subsection{Connections}\label{connections}

Recall that a \emph{horizontal distribution}
on a fiber bundle $P \overset{\pi}{\to} M$ is
 a subbundle $H \overset{\imath}{\hookrightarrow} TP$ such that $TP = \ker \pi_* \oplus H$.

Let $\mpg$ be a principal groupoid bundle. A horizontal distribution defines a \emph{connection 1-form} $\alpha \in \Omega^1(P,\epsilon^{*}A)$, which is obtained by assigning  to each vector its vertical component.

%\begin{Prop}\label{ghostlight}
%The vertical subbundle $\ker \pi_*$ of $TP$ is isomorphic to the pullback $\epsilon^{*} A$. Algebraically:
%\begin{align*}
%C^\infty(P) \otimes_{C^\infty(G_0)} \Gamma(A) &\overset{\sim}{\longrightarrow} \Gamma(\ker \pi_*)  \\
%f \otimes X &\longmapsto f \vecc X   \; .
%\end{align*}
%where the right $C^\infty(G_0)$-module structure on $C^\infty(P)$ is induced by $\epsilon$.
%\end{Prop}

To construct explicitly the connection 1-form, one can use the division map $\delta$. When restricted to the second coordinate for a fixed $p \in P$, it gives rise to a diffeomorphism
$ \delta(p, \cdot) := \delta_{p}: P_{\pi(p)} \to G_{\epsilon(p)}$.
%Its derivative map at $p \in P$ sends $X_{p} \in T_{p}P_{\pi(p)}$ (i.e. a vertical vector at $p$) into the fiber $A_{\epsilon(p)} = T_{1_{\epsilon(p)}} G_{\epsilon_{p}}$ of the Lie algebroid of $G$:
%\begin{eqnarray*}
%(\delta_{p})_{*}: T_{p}P_{\pi(p)} &\to& T_{1_{\epsilon(p)}}G_{\epsilon(p)} \\
%X_{p} &\mapsto& (\delta_{p})_{*,p}X_{p} \;.
%\end{eqnarray*}
Let $X_p \in T_p P$, then the connection form 
$\alpha \in \Omega^{1}(P,\epsilon^{*}A)$ associated to the distribution $H$ is:
\[ \alpha(X_p)=(\delta_{p})_{*,p}\Ver(X_p) \; ,\]
where $\Ver(X_p)$ is the projection of $X_p$ on $\ker \pi_*$ uniquely defined by the splitting $TP = H\oplus\ker\pi_{*}$. Equivalently $\alpha$ can be thought of as a bundle map $TP \to A$ over the momentum map $\epsilon:P \to G_0$:
%\[
%\xymatrix{
%TP \ar[d] \ar[r]^{\alpha} &A\ar[d]\\ 
%P \ar[r]^{\epsilon} &G_{0}}
%\] 

\begin{Lem}\label{ab}
Let $\mpg$ be a principal groupoid bundle.
There is one-one correspondence between  the following:
\begin{enumerate}
\item horizontal distributions $H\subset TP$;
\item 1-forms $\alpha \in \Omega^{1}(P,\epsilon^{*}A)$;
\item bundle maps $\alpha\diese: TP \to A$ over the momentum map $\epsilon:P \to G_0$.
\end{enumerate}
\end{Lem}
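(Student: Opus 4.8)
The plan is to treat the two equivalences separately: the passage between (2) and (3) is purely formal, while the geometric content of the lemma lies in matching horizontal distributions with connection $1$-forms, i.e.\ items (1) and (2). For (2)$\Leftrightarrow$(3) I would simply invoke the universal property of the pullback bundle $\epsilon^{*}A \to P$. A form $\alpha \in \Omega^{1}(P,\epsilon^{*}A)$ is by definition a vector bundle morphism $TP \to \epsilon^{*}A$ covering $\mathrm{id}_{P}$; post-composing with the canonical map $\epsilon^{*}A \to A$ (which covers $\epsilon$) yields a bundle map $\alpha\diese\colon TP \to A$ over $\epsilon$, and conversely any bundle map $TP\to A$ over $\epsilon$ factors uniquely through $\epsilon^{*}A$ over $\mathrm{id}_{P}$. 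These assignments are manifestly mutually inverse, so no geometry is involved here.

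For (1)$\to$(2) I would take the formula $\alpha(X_{p}) = (\delta_{p})_{*,p}\,\Ver(X_{p})$ as the definition and first verify that it produces a genuine element of $(\epsilon^{*}A)_{p} = A_{\epsilon(p)}$. Since $\Ver(X_{p}) \in \ker\pi_{*,p} = T_{p}P_{\pi(p)}$ and $\delta_{p}\colon P_{\pi(p)} \to G_{\epsilon(p)}$ sends $p$ to $\delta_{P}(p,p) = 1_{\epsilon(p)}$ by Proposition~\ref{prop.DivisionMap}(1) while keeping the source fixed at $\epsilon(p)$ by Proposition~\ref{prop.DivisionMap}(2), its differential carries $\Ver(X_{p})$ into $T_{1_{\epsilon(p)}}G_{\epsilon(p)} = \ker s_{*,1_{\epsilon(p)}} = A_{\epsilon(p)}$, the fibre of the Lie algebroid. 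Smoothness of $\alpha$ follows from smoothness of $\delta_{P}$ and of the projection $\Ver$ attached to $H$. The structural fact I would isolate at this stage is that $\delta_{p}$ is a \emph{diffeomorphism}: it is exactly the restriction to the fibre over $p$ of the inverse of $(\pr_{1},\mu)$, hence forced by the principal-bundle axiom. Consequently $(\delta_{p})_{*,p}$ restricts to a linear isomorphism $\ker\pi_{*,p} \xrightarrow{\ \sim\ } A_{\epsilon(p)}$, and because $\Ver$ is the identity on $\ker\pi_{*}$, the form $\alpha$ restricts on the vertical bundle to this canonical isomorphism $\theta\colon \ker\pi_{*} \xrightarrow{\ \sim\ } \epsilon^{*}A$.

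The inverse map (2)$\to$(1) then reads $\alpha \mapsto H := \ker\alpha$, and here is the one point requiring care: the correspondence is with those $\alpha$ whose vertical restriction is the fixed isomorphism $\theta$ above — that is, with \emph{connection} $1$-forms in the sense of the preceding paragraph, not with arbitrary $\epsilon^{*}A$-valued forms. For such an $\alpha$, the fact that $\alpha|_{\ker\pi_{*}} = \theta$ is an isomorphism gives $\ker\alpha \cap \ker\pi_{*} = 0$, while surjectivity of $\alpha$ yields $\operatorname{rk}\ker\alpha = \dim P - \operatorname{rk}A = \dim M$, so a rank count forces $TP = \ker\alpha \oplus \ker\pi_{*}$ and $\ker\alpha$ is a horizontal distribution. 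Finally I would check the two assignments are mutually inverse: from $H$ one recovers $\ker\alpha = \{X : \Ver(X)=0\} = H$ by injectivity of $(\delta_{p})_{*,p}$ on the vertical, and from $\alpha$ one recovers $\alpha$ itself because the vertical projection induced by $\ker\alpha$ composed with $(\delta_{p})_{*,p}$ reproduces $\theta$ by the normalization. The main obstacle, and the only place where the principal-groupoid hypothesis is genuinely used, is establishing that $\delta_{p}$ is a fibrewise diffeomorphism together with the identification $T_{1_{x}}G_{x} = A_{x}$; once these are in hand everything else is linear algebra performed fibre by fibre.
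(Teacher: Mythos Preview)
Your proposal is correct and follows the same construction the paper sketches in the paragraph immediately preceding the lemma (the paper does not supply a separate proof; the formula $\alpha(X_p)=(\delta_p)_{*,p}\Ver(X_p)$ and the remark that $\alpha$ may equivalently be regarded as a bundle map over $\epsilon$ are all that is given). Your treatment of (2)$\Leftrightarrow$(3) via the universal property of $\epsilon^*A$ and of (1)$\Leftrightarrow$(2) via $\delta_p$ and the rank count is exactly the intended argument, carried out in full.

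One point worth flagging is your own caveat: as literally stated, item~(2) reads ``1-forms $\alpha\in\Omega^1(P,\epsilon^*A)$'', which is too large a class to be in bijection with horizontal distributions. You correctly restrict to those $\alpha$ whose vertical restriction equals the canonical isomorphism $\theta=(\delta_p)_{*,p}\colon\ker\pi_{*,p}\xrightarrow{\sim}A_{\epsilon(p)}$; this is implicit in the paper's use of the phrase ``connection 1-form'' but not spelled out in the statement of the lemma itself. Your proof is thus slightly more careful than the paper on this point, and the discrepancy is in the statement rather than in your argument.
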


We say that a horizontal distribution satisfies the \emph{\moerdijk condition} if $\epsilon_* H = 0$ \cite{moerdijk.mrcun:2003a}. In this case the tangent action
of $G$ is well defined.

\begin{Def}
A \emph{principal connection} on a principal groupoid bundle $\mpg$ is a horizontal distribution satisfying the \moerdijk condition and such that:
\[ H_{p.g} = (R_g)_* H_p \;, \quad \forall (p,g) \in P {_{\epsilon}\times_{s}} G \;.\]
\end{Def}

%Recall that an \textbf{Ehresmann connection} on a fiber bundle $X\xrightarrow{\phi} Y$ is a horizontal distribution satisfying the following technical assumption: any path on $Y$ starting from $y\in Y$ has a unique horizontal lift on $X$ starting from a given point $x$ in the fiber of $X$ over $y$. This additional assumption is required to avoid horizontal lifts going to infinity in a finite time.

\begin{Prop}
Let $\mpg$ be a principal groupoid bundle. Then
\begin{enumerate}
\item A horizontal distribution $H$ satisfies the \moerdijk condition if and only if $\alpha\diese: TP \to A$ as defined in Lemma~\ref{ab} is a bundle map over $\epsilon$ such that 
\begin{equation*}\label{AnchorConditionGlobal}
\rho \circ \alpha^{\sharp} = \epsilon_{*} 
\;.\end{equation*}
\item $H$ is a principal connection if and only if,
in addition, the connection 1-form $\alpha$ is equivariant, i.e. for any local bisection $\gamma$ of $G$:
\[ R_{\gamma}^{*} \alpha = \Ad_{\gamma^{-1}}\alpha \; .\]
\end{enumerate}
\end{Prop}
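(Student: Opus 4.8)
The plan for part~(1) is a one-line computation from the definition of $\alpha$. For $X_p\in T_pP$ one has $\alpha^{\sharp}(X_p)=\alpha(X_p)=(\delta_p)_{*,p}\Ver(X_p)$, so that $\rho\bigl(\alpha^{\sharp}(X_p)\bigr)=t_*(\delta_p)_{*,p}\Ver(X_p)=(t\circ\delta_p)_{*,p}\Ver(X_p)$. By Proposition~\ref{prop.DivisionMap}(2) the map $t\circ\delta_p$ coincides with $\epsilon$ on the fibre $P_{\pi(p)}$, hence $\rho(\alpha^{\sharp}(X_p))=\epsilon_*\Ver(X_p)$. Writing $X_p=\Ver(X_p)+\bigl(X_p-\Ver(X_p)\bigr)$ with $X_p-\Ver(X_p)\in H_p$ and using linearity of $\epsilon_*$, I conclude that $\rho\circ\alpha^{\sharp}=\epsilon_*$ holds if and only if $\epsilon_*$ kills every horizontal vector, i.e.\ if and only if $\epsilon_*H=0$; this is precisely the \moerdijk condition.

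For part~(2) I assume the \moerdijk condition $\epsilon_*H=0$ (so that $H_p\subseteq\ker\epsilon_*$ for all $p$ and, as noted in the text, the tangent action is defined), and recall that $H=\ker\alpha$ by Lemma~\ref{ab}. Fixing a local bisection $\gamma\in\bis(G)$, the map $R_\gamma(p)=p\,\gamma(\epsilon(p))$ is a diffeomorphism of $P$ preserving the $\pi$-fibres, so $(R_\gamma)_*$ restricts to an isomorphism of $\ker\pi_*$ and carries any complement of $\ker\pi_*$ to a complement of $\ker\pi_*$. I would establish the identity $R_\gamma^{*}\alpha=\Ad_{\gamma^{-1}}\alpha$ by checking it separately on the two summands of $TP=H\oplus\ker\pi_*$.

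On the vertical summand the identity should hold automatically, and this is the technical heart of the argument. The key input is the conjugation formula $\delta(R_\gamma(p),R_\gamma(q))=\gamma(\epsilon(p))^{-1}\,\delta(p,q)\,\gamma(\epsilon(q))$, which I would derive from Proposition~\ref{prop.DivisionMap}(4) together with the elementary left identity $\delta(pg,q)=g^{-1}\delta(p,q)$ (obtained by cancelling $p$ in $p\,g\,b=q=p\,\delta(p,q)$, using freeness of the action). Fixing $p$ and differentiating $q\mapsto\delta(R_\gamma(p),R_\gamma(q))$ at $q=p$ along a vertical vector $V$ then rewrites $\alpha\bigl((R_\gamma)_*V\bigr)$ as the differential at the unit $1_{\epsilon(p)}$ of the conjugation $\conj_\gamma(a)=\gamma(s(a))^{-1}\,a\,\gamma(t(a))$ applied to $\alpha(V)$; by definition this differential is the adjoint action $\Ad_{\gamma^{-1}}$ of $\gamma$ on $A$, so $(R_\gamma^{*}\alpha)(V)=\Ad_{\gamma^{-1}}\alpha(V)$ with no hypothesis on $H$.

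On the horizontal summand the right-hand side vanishes, because $\alpha|_H=0$ and $\Ad_{\gamma^{-1}}$ is a fibrewise isomorphism; thus the identity restricted to $H_p$ reads $\alpha\bigl((R_\gamma)_*X\bigr)=0$ for $X\in H_p$, i.e.\ $(R_\gamma)_*H_p\subseteq H_{R_\gamma(p)}$, which upgrades to equality by the isomorphism property noted above. It remains to match this with the equivariance $H_{pg}=(R_g)_*H_p$ of the definition of principal connection: since $\gamma$ is constant equal to $g:=\gamma(\epsilon(p))$ along the $\epsilon$-fibre through $p$ and $H_p\subseteq\ker\epsilon_*$, the differentials $(R_\gamma)_*$ and $(R_g)_*$ agree on $H_p$, while conversely every arrow $g$ with $s(g)=\epsilon(p)$ extends to a local bisection. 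Hence the horizontal identity for all $\gamma$ is equivalent to $H_{pg}=(R_g)_*H_p$ for all $(p,g)$, and combined with the automatic vertical identity this proves~(2). I expect the main obstacle to be the vertical computation, namely pinning down the groupoid Leibniz rule so that the differentiated conjugation formula reproduces exactly the chosen normalisation of $\Ad_{\gamma^{-1}}$; the remaining steps are bookkeeping with the properties of $\delta$.
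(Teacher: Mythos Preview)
Your argument is correct. For part~(1) you do essentially the same computation as the paper: both establish $\rho\circ\alpha^{\sharp}=\epsilon_*\circ\Ver$ and then observe that this equals $\epsilon_*$ iff $\epsilon_*|_H=0$. The only stylistic difference is that the paper represents $\Ver(X_p)$ explicitly as $\tfrac{d}{d\tau}\big|_0(p\cdot g_\tau)$ and tracks the path $g_\tau$ through the computation, whereas you invoke the identity $t\circ\delta_p=\epsilon|_{P_{\pi(p)}}$ from Proposition~\ref{prop.DivisionMap}(2) directly; your route is marginally cleaner but not substantively different.

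For part~(2) the paper's proof reads, in its entirety, ``This is obvious.'' Your decomposition into the horizontal and vertical summands, the conjugation identity $\delta(R_\gamma p,R_\gamma q)=\gamma(\epsilon(p))^{-1}\delta(p,q)\gamma(\epsilon(q))$, and the matching of $(R_\gamma)_*|_{H_p}$ with $(R_g)_*|_{H_p}$ via $H_p\subset\ker\epsilon_*$ together constitute a genuine proof where the paper offers none. Your caveat about normalisations is well placed: the only point needing care is that differentiating $a\mapsto\gamma(s(a))^{-1}a\,\gamma(t(a))$ at $1_{\epsilon(p)}$ is precisely $(L_{\gamma(\epsilon(p))^{-1}})_*(R_\gamma)_*$ on $A_{\epsilon(p)}$, which is the standard definition of $\Ad_{\gamma^{-1}}$ used in the paper. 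So your proposal is strictly more informative than the paper here.
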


\begin{proof}
(a) Let $p\in P$ and $X$ be an arbitrary vector field on $P$. $\Ver(X_{p})$ can be written as:
\[ 
\Ver(X_{p}) = \frac{d}{d\tau} \Big|_{0} \Big( p \cdot g_{\tau}\Big) \;,
\] 
where $\tau\mapsto g_{\tau}$ is a path in $G$ such that $s(g_{\tau})=\epsilon(p)$ and $g_{0} = 1_{\epsilon(p)}$. Notice that
\begin{align*}
\alpha(X_{p}) 
=&  (\delta_{p})_{*,p} \Ver(X_p) \\
=&  (\delta_{p})_{*,p} \frac{d}{d\tau}\Big|_{0} \Big( p \cdot g_{\tau}\Big) \\
=&  \frac{d}{d\tau}\Big|_{0} \delta(p,p\cdot g_{\tau}) \\
=&  \frac{d}{d\tau}\Big|_{0} \Big(\delta(p,p)\cdot g_{\tau} \Big) = 
\frac{d}{d\tau}\Big|_{0} g_{\tau} \;.
\end{align*}
Thus the horizontal part of $X_{p}$ is
\[
\Hor(X_{p}) = X_{p} -  \frac{d}{d\tau} \Big|_{0}\Big( p \cdot g_{\tau}\Big) \;,
\]
and one can apply $\epsilon_{*}:TP \to TG_{0}$ to both sides to obtain:
\begin{align*}
\epsilon_{*} \Hor(X_{p}) =& 
\epsilon_{*} X_{p} - \epsilon_{*} \frac{d}{d\tau} \Big(p \cdot g_{\tau}\Big)\Big|_{0} \\
=& \epsilon_* X_{p} - \frac{d}{d\tau} \epsilon(p\cdot g_{\tau})\Big|_{0} \\
=& \epsilon_* X_{p} -  \frac{d}{d\tau} t(g_{\tau}) \Big|_{0} \\
=& \epsilon_* X_{p} - \rho \circ \alpha(X_{p})
\end{align*}
Therefore:
\[
\epsilon_{*} H = 0 \Longleftrightarrow \epsilon_{*} =\rho \circ \alpha \;.
\]

%On the other hand, given $\gamma \in \Omega^{1}(P, \epsilon^*A)$ satisfying \eqref{AnchorConditionGlobal}, the connection is completely defined by the equations: \[ \left\{ \begin{array}{l} R_{g}^{*}\alpha:= \alpha \;, \\ H = \ker \alpha \;. \end{array} \right. \]

(b) This is obvious.
\end{proof}

\subsection{Curvature}

We denote the horizontal and vertical
parts of a vector $v\in TP$ by $\Hor(v)$ and $\Ver(v)$
respectively. Recall that the 
Ehresmann 
curvature of a 
horizontal distribution $H$, on the bundle $P\xrightarrow{\phi}M$, is 
the $2$-form on $P$, valued
 in the vertical space $\ker\phi_*$, which is  defined by
\begin{equation} \label{eq:defehrcurv}
\omega(u,v)=-\Ver\big(\lie{\Hor(\tilde{u})}{\Hor(\tilde{v})}\big),
 \end{equation}
 where $u,v\in T_p P$ and $\tilde{u}, \tilde{v}$ are vector fields on $P$
 such that $\tilde{u}_p=u$ and $\tilde{v}_p=v$.
It is easy to check that the right hand side of Equation~\eqref{eq:defehrcurv}
is well defined, i.e. independent of the choice of the vector fields $\tilde{u},\tilde{v}$.

Using the action of the groupoid $G$, one can identify the vertical space $\ker\phi_*\subset T_p P$ with $A|_{\epsilon p}$. Therefore, the Ehresman curvature can be seen as a 2-form
$\Omega\in\Omega^{2}(P,\epsilon^{*}A)$, or equivalently
a bundle map 
\[ \Omega^\#:\wedge^2TP\to A \]
over $\epsilon:P\to G_0$.

Given $\alpha\in\OO(P,\epsilon^* A)$, let 
\[ F_{\alpha} = \dd_{\mathrm{P}}\circ{\alpha\diese}^{*} 
-{\alpha\diese}^{*}\circ\dd_{A} :\Omega^1(A)\to \Omega^2(P) ,\]
where $\dd_{\mathrm{P}}:\OO^1(P)\to\OO^2(P)$ and 
$\dd_{A}:\sections{A^*}\to\sections{\wedge^2 A^*}$ are the differentials of the Lie algebroids $TP$ and $A$ respectively.
The following result relates the curvature to $F_{\alpha}$.

\begin{Prop}\label{cd}
Assume that $H$ is a horizontal distribution
satisfying the \moerdijk condition with curvature $\Omega$. Then
\[ (\Omega^\#)^* =F_{\alpha} ,\]
where $\alpha\in\Omega^1(P,\epsilon^* A)$ is as in Lemma~\ref{ab} and both sides are considered as maps $\Omega^1(A)\to\Omega^2(P)$.
\end{Prop}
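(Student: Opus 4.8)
The plan is to prove this identity of $\epsilon^{*}A$-valued forms by turning it into a Cartan structure equation for the curvature and then comparing the two sides in a local frame. Since $(\Omega^{\#})^{*}\xi$ and $F_{\alpha}(\xi)$ are both honest differential $2$-forms on $P$, it suffices to verify that they agree after evaluation on pairs of vector fields. I would fix a local frame $\{e_{k}\}$ of $A$ over $G_{0}$ with dual coframe $\{e^{k}\}$, structure functions $\Lie{e_{i}}{e_{j}}=c^{k}_{ij}e_{k}$, and anchor $\rho(e_{i})=\rho^{\mu}_{i}\de_{\mu}$ in local coordinates $x^{\mu}$ on $G_{0}$. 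Writing $\alpha=\alpha^{k}\otimes\epsilon^{*}e_{k}$ with $\alpha^{k}\in\Omega^{1}(P)$, $\xi=\xi_{k}e^{k}$ and $\Omega=\Omega^{k}\otimes\epsilon^{*}e_{k}$, one has $\beta:=(\alpha^{\sharp})^{*}\xi=(\epsilon^{*}\xi_{k})\,\alpha^{k}$ and $(\Omega^{\#})^{*}\xi=(\epsilon^{*}\xi_{k})\,\Omega^{k}$, so the whole statement is an identity among the $\alpha^{k}$, $\Omega^{k}$ and $c^{k}_{ij}$.

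First I would compute $F_{\alpha}(\xi)$ explicitly. The Lie algebroid differential gives $\dd_{A}e^{k}=-\tfrac12 c^{k}_{ij}e^{i}\wedge e^{j}$ and $\dd_{A}\xi_{k}=\rho^{\mu}_{i}(\de_{\mu}\xi_{k})\,e^{i}$, whence $(\alpha^{\sharp})^{*}(\dd_{A}\xi)=\epsilon^{*}(\rho^{\mu}_{i}\de_{\mu}\xi_{k})\,\alpha^{i}\wedge\alpha^{k}-\tfrac12\,\epsilon^{*}(\xi_{k}c^{k}_{ij})\,\alpha^{i}\wedge\alpha^{j}$, while $\dd_{P}\beta=\dd(\epsilon^{*}\xi_{k})\wedge\alpha^{k}+(\epsilon^{*}\xi_{k})\,\dd_{P}\alpha^{k}$. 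The decisive input is the Moerdijk relation $\rho\circ\alpha^{\sharp}=\epsilon_{*}$ from part~(a) of the previous Proposition, which in this frame reads $\dd(\epsilon^{*}x^{\mu})=(\epsilon^{*}\rho^{\mu}_{i})\,\alpha^{i}$; it forces $\dd(\epsilon^{*}\xi_{k})=\epsilon^{*}(\rho^{\mu}_{i}\de_{\mu}\xi_{k})\,\alpha^{i}$ and so the first term of $\dd_{P}\beta$ exactly cancels the anchor term of $(\alpha^{\sharp})^{*}(\dd_{A}\xi)$. What survives is $F_{\alpha}(\xi)=(\epsilon^{*}\xi_{k})\big(\dd_{P}\alpha^{k}+\tfrac12(\epsilon^{*}c^{k}_{ij})\,\alpha^{i}\wedge\alpha^{j}\big)$.

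Because $\xi$ is arbitrary, the claim is then equivalent to the structure equation $\Omega^{k}=\dd_{P}\alpha^{k}+\tfrac12(\epsilon^{*}c^{k}_{ij})\,\alpha^{i}\wedge\alpha^{j}$, and the remaining task is to read this off from the definition of the Ehresmann curvature. The clean case is horizontal arguments: for $u,v\in H$ extended to horizontal vector fields $\tilde u,\tilde v$ the forms $\alpha^{k}$ annihilate them, so $\dd_{P}\alpha^{k}(u,v)=-\alpha^{k}(\lie{\tilde u}{\tilde v})$ and the quadratic term drops out, while by definition $\Omega^{k}(u,v)=-\langle e^{k},\alpha^{\sharp}\Ver\lie{\tilde u}{\tilde v}\rangle=-\alpha^{k}(\lie{\tilde u}{\tilde v})$ once $\ker\pi_{*}$ is identified with $\epsilon^{*}A$ through the division map $\delta_{p}$ (the identification that realizes $\alpha^{\sharp}$ on vertical vectors). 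The two expressions coincide, which settles the horizontal–horizontal component.

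I expect the main obstacle to be the contributions involving the vertical bundle. The Ehresmann curvature $\Omega$ is horizontal by construction, so to match it one must show that the right-hand side of the structure equation is horizontal too, i.e.\ that its contraction with a fundamental (vertical) vector field $\hat a$ vanishes. By the same frame computation, together with $\epsilon_{*}\tilde v=\rho(\alpha^{\sharp}\tilde v)=0$ for horizontal $\tilde v$, this reduces to controlling the bracket term $\alpha^{\sharp}\lie{\hat a}{\tilde v}$. Handling this bracket is the crux of the proof: it is exactly here that the compatibility of $\alpha$ with the groupoid action — encoded in the division-map identification $\delta_{p}$ of $\ker\pi_{*}$ with $\epsilon^{*}A$ — has to be invoked so that the vertical contributions to $\dd_{P}\alpha^{k}+\tfrac12(\epsilon^{*}c^{k}_{ij})\alpha^{i}\wedge\alpha^{j}$ and to $\Omega^{k}$ match. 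Once the vertical and mixed components are shown to agree, tensoriality of both $2$-forms finishes the comparison on a full frame adapted to $TP=H\oplus\ker\pi_{*}$.
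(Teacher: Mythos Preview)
Your approach diverges substantially from the paper's, and your proof is explicitly incomplete: you carry out the frame reduction and the horizontal--horizontal case, but you openly leave the vertical and mixed contractions as ``the crux'' and ``the main obstacle'' without resolving them. That is a genuine gap, not a detail. Concretely, your reduction to the structure equation $\Omega^{k}=\dd_{P}\alpha^{k}+\tfrac12(\epsilon^{*}c^{k}_{ij})\alpha^{i}\wedge\alpha^{j}$ forces you to show that the right-hand side is horizontal; contracting with a fundamental field $\hat a$ and a horizontal extension $\tilde v$ leaves exactly $-\alpha^{k}\big(\lie{\hat a}{\tilde v}\big)$, and you give no argument for why this vanishes. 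Your suggestion that ``compatibility of $\alpha$ with the groupoid action'' closes this is misaimed: the proposition assumes only the \moerdijk condition $\rho\circ\alpha^{\sharp}=\epsilon_{*}$, not equivariance of $H$, so you cannot invoke $R_{g}$-invariance of the horizontal distribution to control $\lie{\hat a}{\tilde v}$.

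The paper's proof sidesteps the horizontal/vertical case analysis entirely. Given $X_{p}\in T_{p}P$, it extends $\alpha(X_{p})$ to a section $\calX\in\Gamma(A)$ and sets $X=\vecc{\calX}+f\cdot\widetilde{\Hor X_{p}}$, so that $X$ is \emph{$\alpha$-related} to $\calX$ near $p$; similarly for $Y$ and $\calY$. With these specific extensions, the six terms in $\big((\alpha^{\sharp})^{*}\dd_{A}\lambda-\dd_{P}(\alpha^{\sharp})^{*}\lambda\big)(X,Y)$ collapse: the four ``anchor'' and ``directional'' terms cancel pairwise thanks to $\rho\circ\alpha^{\sharp}=\epsilon_{*}$ and $\epsilon^{*}\braket{\lambda}{\calX}=\braket{(\alpha^{\sharp})^{*}\lambda}{X}$, leaving only
\[
(F_{\alpha}\lambda)(X,Y)_{p}=\braket{\lambda}{\alpha^{\sharp}\lie{X}{Y}_{p}-\lie{\calX}{\calY}_{A}|_{\epsilon(p)}} .
\]
The bracket $\lie{X}{Y}$ is then expanded and compared with $\Omega$. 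The point is that the paper never has to prove horizontality of a structure-equation expression; the $\alpha$-relatedness of the chosen extensions organizes the cancellation in one step. If you want to salvage your route, you must either supply an honest argument that $\dd_{P}\alpha^{k}+\tfrac12(\epsilon^{*}c^{k}_{ij})\alpha^{i}\wedge\alpha^{j}$ is horizontal under the sole hypothesis $\epsilon_{*}H=0$, or abandon the case split and adopt $\alpha$-related extensions as the paper does.
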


\begin{proof}
Consider $X_{p} \in T_{p} P$. Take its image $\alpha(X_{p})$ and extend it to a smooth section $\calX$ of $A$. Let $\vecc \calX \in \Gamma(VP)$ be the fundamental vector field on P generated by $\calX$. 
Thus $\alpha\diese(\vecc\calX) =\calX_{\epsilon(p)}$.
Take $U\subset P$ such that there is a smooth horizontal extension $\widetilde{\Hor X_{p}} \in \Gamma(H_{|U})$ of $\Hor X_{p}$. Choose an open neighbourhood $V$ of $p$ in $P$ with $V \subset \cc{V} \subset U$, and a function $f \in C^{\infty}(P,\R)$ such that $\supp(f) \subset U$ and $f_{|V}=1$. Then the vector field
\[ X = \vecc \calX + f.\widetilde{\Hor X_{p}} \;\in \Gamma(TP) \]
is $\alpha$-related to $\calX$.
Similarly, given $Y_{p} \in T_{p} P$ we construct a pair of $\alpha$-related vector fields $Y$ and $\calY$ on an open neighbourhood of $p$, such that the vector field $Y$ extends the tangent vector $Y(p)$. 
For any $\lambda \in A^{*}$, one has:
\begin{align*}
(F_{\alpha}\cdot\lambda) (X \wedge Y)_{p} 
=& \braket{(\alpha\diese)^{*}d_{A} \lambda}{X_{p} \wedge Y_{p}} 
- \braket{d_P((\alpha\diese)^{*}.\lambda)}
{X_{p} \wedge Y_{p}} \\
=& \braket{d_{A}\lambda}{\alpha\diese.X_{p} \wedge \alpha\diese.Y_{p}} 
- \braket{d_P((\alpha\diese)^{*}.\lambda)}
{X_{p} \wedge Y_{p}} \\
=& \braket{d_{A}\lambda}{\calX \wedge \calY}_{\epsilon(p)} 
- \braket{d_P((\alpha\diese)^{*}.\lambda)}
{X_{p} \wedge Y_{p}} \\
=& \rho(\calX)_{\epsilon(p)} \braket{\lambda}{\calY} 
- \rho(\calY)_{\epsilon(p)} \braket{\lambda}{\calX} 
- \braket{\lambda}
{\Lie{\calX}{\calY}_A}_{\epsilon(p)} \\
&- X_{p}\braket{(\alpha\diese)^{*}.\lambda}{Y} 
+ Y_{p}\braket{(\alpha\diese)^{*}\lambda}{X} 
+ \braket{(\alpha\diese)^{*}.\lambda}
{\Lie{X}{Y}_{p}} \\
=& \braket{\lambda}{\alpha.\Lie{X}{Y}_{p} 
- \Lie{\calX}{\calY}_A|_{\epsilon(p)}} \;.
\end{align*} 
The last equality follows from the \moerdijk condition and from 
\[ \epsilon^{*}\braket{\lambda}{\calX} = \braket{\alpha^{*}\lambda}{X} \;.\]
We conclude that
\[ (F_{\alpha}.\lambda) (X \wedge Y)_{p} = \braket{\lambda}{\alpha\diese.\Lie{X}{Y}_{m} - \Lie{\calX}{\calY}_{A}|_{\epsilon(p)}} \;.\]
Finally, we have 
\begin{multline*}
\Lie{X}{Y}=\Lie{\vecc \calX + f.\widetilde{\Hor X_{p}}}{\vecc \calY + g.\widetilde{\Hor Y_{p}}} \\ 
=\Lie{\vecc \calX}{\vecc \calY}
+\vecc \calX(g)\widetilde{\Hor Y_{p}}
-\vecc \calY(f)\widetilde{\Hor X_{p}}
+\Lie{f.\widetilde{\Hor X_{p}}}{g.\widetilde{\Hor Y_{p}}}
\end{multline*}
and 
\[ \alpha\Lie{X}{Y}= 
\alpha\Lie{\vecc \calX}{\vecc \calY} 
+\alpha\Lie{f.\widetilde{\Hor X_{p}}}{g.\widetilde{\Hor Y_{p}}} 
=\Lie{\calX}{\calY}_A+\Omega(X,Y) .\]
\end{proof}

Recall that a bundle map between Lie algebroids $f: B \to C$ is a Lie algebroid morphism if and only if its transpose is a chain map $f^{*}:(\sections{\wedge^*C},\dd_{C})\to(\sections{\wedge^*B},\dd_{B})$. 

\begin{Cor}
As above, let $H$ be a horizontal distribution.
Then $H$ is a flat connection satisfying the \moerdijk condition if and only if $\alpha\diese: TP\to A$ is a Lie algebroid morphism. 

$H$ is a flat principal connection if and only if, in addition, $\alpha$ is equivariant. 
\end{Cor}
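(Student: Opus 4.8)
The plan is to run everything through the recalled dictionary between Lie algebroid morphisms and chain maps: $\alpha\diese : TP \to A$ is a Lie algebroid morphism if and only if its transpose $(\alpha\diese)^* : (\Gamma(\wedge^\bullet A^*),\dd_A) \to (\Omega^\bullet(P),\dd_P)$ is a chain map. Since $\dd_A$ and $\dd_P$ are degree-one derivations and $(\alpha\diese)^*$ is a homomorphism of graded algebras, both composites $\dd_P \circ (\alpha\diese)^*$ and $(\alpha\diese)^* \circ \dd_A$ are derivations relative to $(\alpha\diese)^*$; hence they coincide everywhere as soon as they coincide on algebra generators, i.e. in degree $0$ (functions on $G_0$) and degree $1$ ($\Omega^1(A)=\Gamma(A^*)$). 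So I would reduce the whole morphism condition to these two equations and match each with a geometric condition on $H$.

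First I would treat degree $0$. For $h \in C^\infty(G_0)$ one has $(\alpha\diese)^* h = \epsilon^* h$, and evaluating at $X_p \in T_pP$ gives $\langle \dd_P(\epsilon^* h), X_p\rangle = \langle \dd h, \epsilon_* X_p\rangle$ on the one side and $\langle (\alpha\diese)^*(\dd_A h), X_p\rangle = \langle \dd h, \rho(\alpha\diese X_p)\rangle$ on the other. Letting $h$ vary, the degree-$0$ chain-map equation is exactly $\rho \circ \alpha\diese = \epsilon_*$, which by part (a) of the preceding Proposition is precisely the \moerdijk condition on $H$.

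Next I would treat degree $1$. There the chain-map equation reads $\dd_P \circ (\alpha\diese)^* = (\alpha\diese)^* \circ \dd_A$ on $\Omega^1(A)$, that is, $F_\alpha = 0$. Having already imposed the \moerdijk condition from degree $0$, I may invoke Proposition~\ref{cd}, which identifies $F_\alpha$ with $(\Omega^\#)^*$; as transposition is injective on bundle maps, $F_\alpha = 0$ is equivalent to $\Omega^\# = 0$, i.e. to $H$ being flat. Combining the two degrees yields the first assertion: $\alpha\diese$ is a Lie algebroid morphism if and only if $H$ satisfies the \moerdijk condition and is flat. For the second assertion I would simply append part (b) of the preceding Proposition: among distributions satisfying the \moerdijk condition the principal connections are exactly those whose form $\alpha$ is equivariant, so $H$ is a flat principal connection precisely when $\alpha\diese$ is a morphism and, in addition, $\alpha$ is equivariant.

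The main obstacle is the degree-$0$ bookkeeping: one must see that the chain-map condition in degree zero is \emph{exactly} the anchor condition $\rho \circ \alpha\diese = \epsilon_*$. This is the point where the \moerdijk condition is recovered, and it must be in force before Proposition~\ref{cd} is applicable, since that proposition assumes the \moerdijk condition in order to convert flatness ($\Omega^\# = 0$) into $F_\alpha = 0$. Once this matching is secured, the remaining steps are formal consequences of the morphism--chain-map characterization and of Proposition~\ref{cd}.
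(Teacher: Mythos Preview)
Your argument is correct and is precisely the proof the paper leaves implicit: the Corollary is stated without proof, relying on the preceding characterization of Lie algebroid morphisms as chain maps together with Proposition~\ref{cd}. Your observation that the degree-$0$ chain-map condition recovers $\rho\circ\alpha\diese=\epsilon_*$ (the \moerdijk condition), and that this must hold before Proposition~\ref{cd} can be invoked in degree~$1$, is exactly the logical point the paper expects the reader to supply.
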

 
%\begin{proof}
%Let $X_{p} \in T_{p}P$. 
%\begin{eqnarray}\label{yeah}
%(R_{\gamma}^{*} \alpha)(X_{p}) &=& \alpha( R_{\gamma,*} .X_{p}) = \nonumber\\
%&=& (\delta_{p\cdot \gamma})_{*,(p\cdot\gamma)} . \Ver(R_{\gamma,*} .X_{p}) = \nonumber\\
%&=& (\delta_{p\cdot \gamma})_{*,(p\cdot\gamma)} \Big(R_{\gamma,*}.\Ver(.X_{p}) \Big)= \label{equivariance} \\
%&=& R_{\gamma,*}. (\delta_{p\cdot \gamma})_{*,p}.\Ver(.X_{p}) = \label{DivisionRight}\\
%&=& R_{\gamma,*}. L^{-1}_{\gamma,*}(\delta_{p})_{*,p}.\Ver(.X_{p}) =  \label{DivisionLeft}\\
%&=& \Ad(\gamma^{-1})\alpha(X_{p}) \nonumber \;.
%\end{eqnarray}
%We used equivariance of the distribution $H$ in \eqref{equivariance}, and the properties 
%\begin{eqnarray*}
%\delta(p,q\cdot g) = R_{g} \delta(p,q) \;, \\
%\delta(p\cdot g,q) = L^{-1}_{g}\delta(p,q)
%\end{eqnarray*}
%in \eqref{DivisionRight} and \eqref{DivisionLeft}, respectively.
%\end{proof}

%The calculation uses the equivariance of the distribution $H$ in and the properties (iii) and (iv) in Prop. \eqref{prop.DivisionMap} of the division map.
%\begin{eqnarray*}
%\delta(p,q\cdot g) &=& R_{g} \delta(p,q) \;, \\
%\delta(p\cdot g,q) &=& L^{-1}_{g}\delta(p,q) \;.
%\end{eqnarray*}

\subsection{Gauge transformations}
\label{subsec.Moerdijk}

The next step is the introduction of a suitable concept of \emph{gauge transformation}. 
Suppose now $P$ is trivialized by a section $\sigma:M\to P$. Define the pullback connection 1-form $\theta_\sigma := \sigma^*\alpha
\in \Omega^1 (M, \epsilon_\sigma^* A)$, which is a $\epsilon_\sigma^* A$-valued
one-form on $M$. And its induced bundle map $\theta\diese_\sigma:
TM\to A$ over $\epsilon_\sigma (:= \epsilon \circ \sigma): M\to G_0$ is simply the composition
of $\sigma_*: TM\to TP$ with $\alpha\diese: TP\to A$.

%\[
%\xymatrix{
%TM \ar[d] \ar[r]^{\theta} &A\ar[d]\\ 
%M \ar[r]^{\epsilon_{\sigma}} &G_{0}}
%\]

\[
\xymatrix@!0{
& TP \ar@{-}[d] \ar[rrd]^{\alpha\diese} & &   \\
TM\ar@/^/[ur]^{\sigma_{*}}\ar[rrr]^{\theta\diese_\sigma}\ar[dd] & \ar[d] & & A \ar[dd] \\
& P \ar[rrd]^{\epsilon} & &  \\ 
M \ar[rrr]_{\epsilon_{\sigma}}\ar@/^/[ur]^>>{\sigma}  & & &G_{0} }
\]

One can also pull-back the Ehresmann curvature $\Omega$
via $\sigma :M\to P$ to get a 2-form $\Omega_\sigma \in \Omega^2 (M, \epsilon_\sigma^* A)$.
%The local version of Theorem~\ref{thm.Tokyo1Global} will prove useful in trying to understand gauge transformation from the so called passive viewpoint, i.e. as arising from a change in the (local) trivialization.

%\begin{Cor}\label{Tokyo1}
%Let $G \circlearrowright_{\epsilon} P \overset{\pi}{\to} M$ be a principal groupoid bundle trivialized by a section $\sigma$.
%A horizontal distribution $H$ is a principal connection if and only if $\theta: TM \to A$ as defined above is a bundle map over $\epsilon_{\sigma}$ such that 
%\begin{equation}\label{AnchorCondition}
%\rho \circ \theta = \epsilon_{\sigma,*} \;.
%\end{equation}
%\end{Cor}

%%%%%%%%%%%%%%%%%%%%%
%%%%%%%%%%%%%%%%%%%%%

Let $\bis(G)$ be the group of (local) bisections of $G$.  For any $\gamma\in
C^\infty(M, \bis(G))$, $\sigma_\gamma:=\sigma \cdot \gamma$ is another
section of the principal bundle $P\to M$.
A natural question is: how  are $\theta_\sigma$ and $\Omega_\sigma$
related to  $\theta_{\sigma_\gamma}$ and $\Omega_{\sigma_\gamma}$?

\begin{Thm}\label{THM3.6}
We have the following relations:
\begin{gather}
\theta_{\sigma_\gamma} = \Ad_{\gamma\inv}  \theta_\sigma  + \gamma^{-1} \gamma' \;, \label{PhysGaugeTrasf} \\ 
\Omega_{\sigma_\gamma} = \Ad_{\gamma\inv} \Omega_{\sigma} \;. \label{Eq3.5}
\end{gather}
\end{Thm}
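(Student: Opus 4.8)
The plan is to reduce both identities to a pointwise statement on $T_m M$ by differentiating the section $\sigma_\gamma = \sigma\cdot\gamma$ and exploiting the equivariance of the connection. Fix $m\in M$ and $X_m\in T_mM$, and write $\sigma_\gamma(m)=R_{\gamma(m)}\big(\sigma(m)\big)=\sigma(m)\cdot\gamma(m)\big(\epsilon_\sigma(m)\big)$, viewing the bisection $\gamma(m)$ as producing the arrow $g(m):=\gamma(m)(\epsilon_\sigma(m))$ with source $\epsilon_\sigma(m)$. The key first step is a product rule: regarding $\sigma_\gamma$ as the diagonal restriction of $(m,m')\mapsto R_{\gamma(m')}(\sigma(m))$, the chain rule splits the differential as
\[ (\sigma_\gamma)_{*}X_m = (R_{\gamma(m)})_{*}\,\sigma_{*}X_m + V_{X_m}, \]
where the first term transports $\sigma_*X_m$ by the fixed bisection $\gamma(m)$, and the second term $V_{X_m}$, coming from differentiating $\gamma$ alone, is vertical (tangent to the fiber, since the action is fiberwise).

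For \eqref{PhysGaugeTrasf} I would apply $\alpha$ to this decomposition. On the transported term, equivariance from the previous proposition, $R_\gamma^*\alpha=\Ad_{\gamma\inv}\alpha$, gives $\alpha\big((R_{\gamma(m)})_*\sigma_*X_m\big)=\Ad_{\gamma(m)\inv}\theta_\sigma(X_m)$. For the vertical term I would rewrite $V_{X_m}$ in canonical form $\frac{d}{d\tau}\big|_0\,\sigma_\gamma(m)\cdot h_\tau$ with $h_\tau=g(m)\inv g_\tau$ and $g_\tau=\gamma(m_\tau)(\epsilon_\sigma(m))$, so that $h_0=1_{\epsilon(\sigma_\gamma(m))}$; then $V_{X_m}$ is the fundamental vector field generated by $\xi:=\frac{d}{d\tau}\big|_0\,g(m)\inv g_\tau$, and the construction of $\alpha$ (the identity $\alpha\diese(\vecc{\calX})=\calX$ used in the proof of Proposition~\ref{cd}) gives $\alpha(V_{X_m})=\xi$. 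Recognizing $\xi$ as the value on $X_m$ of the Maurer--Cartan-type form $\gamma\inv\gamma'$ and adding the two contributions produces exactly \eqref{PhysGaugeTrasf}. The division-map properties of Proposition~\ref{prop.DivisionMap} (especially $\delta(p,p\cdot g)=\delta(p,p)g$ and $\delta(p,p)=1_{\epsilon(p)}$) are what make this base-point bookkeeping consistent, so that both terms land in $A_{\epsilon_{\sigma_\gamma}(m)}$.

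For \eqref{Eq3.5} I would feed the same decomposition into the curvature. The crucial observation is that the Ehresmann curvature $\Omega$ is horizontal: extending a vertical vector to a vertical vector field annihilates its horizontal part, so by \eqref{eq:defehrcurv} $\Omega$ vanishes whenever one argument is vertical. Hence in the bilinear expansion of $\Omega_{\sigma_\gamma}(X,Y)=\Omega\big((\sigma_\gamma)_*X,(\sigma_\gamma)_*Y\big)$ every term involving $V_X$ or $V_Y$ drops out, leaving only $\Omega\big((R_{\gamma(m)})_*\sigma_*X,(R_{\gamma(m)})_*\sigma_*Y\big)=(R_{\gamma(m)}^*\Omega)(\sigma_*X,\sigma_*Y)$. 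It then remains to use equivariance of the curvature, $R_\gamma^*\Omega=\Ad_{\gamma\inv}\Omega$, which follows from the principal-connection condition $H_{p g}=(R_g)_*H_p$ (this makes $\Hor$ and $\Ver$, hence the defining bracket in \eqref{eq:defehrcurv}, intertwine with $R_g$); this yields $\Omega_{\sigma_\gamma}=\Ad_{\gamma\inv}\Omega_\sigma$.

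The step I expect to be the main obstacle is the product rule together with the identification of the vertical term as $\gamma\inv\gamma'$: one must carefully interpret ``differentiating $\gamma$'' as differentiating the arrow $\gamma(\cdot)(\epsilon_\sigma(m))$ at a fixed base point, track that the relevant arrow $g(m)$ is generally not a unit, and convert the resulting vertical vector into canonical fundamental-vector-field form before $\alpha$ can be evaluated. Keeping the adjoint action of bisections and all source/target base points consistent is where the delicacy lies; by contrast, once horizontality and equivariance of $\Omega$ are in hand, the curvature identity is comparatively routine.
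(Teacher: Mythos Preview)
Your proposal is correct and is organized around the same two-term decomposition as the paper's proof: a ``fixed bisection'' contribution yielding $\Ad_{\gamma\inv}\theta_\sigma$ via equivariance, and a ``varying bisection'' contribution identified with the Maurer--Cartan term $\gamma\inv\gamma'=\Theta_{(\gamma,\epsilon_\sigma)}$.

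The execution differs in one respect worth noting. The paper routes the computation through the local trivialization $\Phi_i$ determined by $\sigma$: it writes $\theta_{\sigma_\gamma}(X_m)=(\Phi_i^*\alpha)\big((\Phi_i\inv\circ\sigma_\gamma)_*X_m\big)$, computes $(\Phi_i\inv\circ\sigma_\gamma)(m)=(m,\gamma(m)[\epsilon_i(m)])$, and then splits the derivative inside $T(\epsilon_i^*\calU_G)$. You instead apply a product rule directly to $\sigma_\gamma=R_{\gamma(\cdot)}\circ\sigma$ on $P$, obtaining $(\sigma_\gamma)_*X_m=(R_{\gamma(m)})_*\sigma_*X_m+V_{X_m}$ and then evaluating $\alpha$ term by term. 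Both routes isolate the same two pieces; yours is slightly more intrinsic (it never leaves $P$), while the paper's trivialization makes the $s$-fiber tangency of the Maurer--Cartan piece manifest inside $\calU_G$.

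A second difference: the paper's proof block treats only \eqref{PhysGaugeTrasf} and does not give an argument for \eqref{Eq3.5}. Your horizontality-plus-equivariance argument for the curvature (that $\Omega$ vanishes on vertical arguments, and $R_\gamma^*\Omega=\Ad_{\gamma\inv}\Omega$ follows from $H_{pg}=(R_g)_*H_p$) is the natural one and fills that gap cleanly.
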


First we need to postulate the meaning
of the  transformation law \eqref{PhysGaugeTrasf} for the connection form.
By thinking of  $C^\infty(M, \bis(G))$ as the \emph{gauge group} $\calG$,
Equation~\eqref{PhysGaugeTrasf}
is exactly the physicists' shorthand notation for the gauge transformed 
vector potential. 

The first term is easily interpreted as an adjoint action: for any $m\in M$, $\Ad_{\gamma^{-1}(m)}$ is 
the automorphism of the Lie algebroid $A$ induced by the adjoint action of the bisection $\gamma\inv(m)$. 
%a bundle map over the diffeomorphism of $P$ induced by the bisection $\gamma(m)$. 
The bundle map $(\Ad_{\gamma\inv}\theta_\sigma)^{\sharp}: A \to A$ over $\epsilon_{\sigma,\gamma} :=t \circ R_\gamma \circ \epsilon_{\sigma}$ 
maps $X_m\in T_m M$ to 
\[ \Ad_{\gamma\inv(m)} (\theta (X_m)) 
\in A_{t(\epsilon(\sigma(m))\cdot\gamma(m))} .\]

The second term is interpreted in terms of the Maurer Cartan form on $\bis(G)$. Let $T^{s}G = \ker s_{*}$. We define the tangent space of $\bis(G)$ at $\sigma$ as the space of maps $\Gamma(G_{0},\sigma^{*}T^{s}G)$ and 
the tangent bundle as 
\[ T \bis(G) = \{(\sigma, \calX_{\sigma}) | \sigma \in \bis(G), \calX_{\sigma} \in \Gamma(G_{0}, \sigma^{*}T^{s}G)\} \;. \]
The Maurer Cartan form $\theta_{MC}$ is defined as follows:
\begin{equation*}
\theta_{MC}: T \bis(G) \to \Gamma(A) :
(\sigma,\calX_{\sigma}) \mapsto (L_{\sigma}^{-1})_{*}\calX_{\sigma} .
\end{equation*}
Using $\gamma \in C^\infty(M, \bis(G))$ and $\epsilon_{\sigma}:M\to G_{0}$, one can define a bundle map $\Theta_{(\gamma, \epsilon_{\sigma})}: TM \to A$ over $\epsilon_{\sigma,\gamma}$
as follows:
\begin{equation*}
\Theta_{(\gamma, \epsilon_{\sigma})}: TM \to A :
X_{m} \mapsto \underbrace{((L^{-1}_{\gamma(m)})_{*} \underbrace{(\gamma_{*} X_{m})}_{T_{\gamma(m)}\bis(G)})}_{\Gamma(A)} |_{\epsilon_{\sigma,\gamma}(m)} 
.\end{equation*}
That is to say, $\Theta_{(\gamma,\epsilon_{\sigma})} = \gamma^{*}\theta_{MC}|_{\epsilon_{\sigma,\gamma}} $ is the pullback of $\theta_{MC}$ via $\gamma$ evaluated along $\epsilon_{\sigma,\gamma}$. 

\begin{proof}[Proof of Theorem~\ref{THM3.6}]
Take $\gamma\in\bis(G)$ and let $\sigma$ be a section of $\phi:P\to M$. Set $\sigma_i=\sigma$ and $\sigma_j=\sigma_i\cdot\gamma$. And let $\Phi_i$ and $\Phi_j$ denote the trivializations associated to $\sigma_i$ and $\sigma_j$.
We note that 
$\sigma_i(m)=\sigma_j(m)\cdot\delta(\sigma_j(m),\sigma_i(m))$.
We now set to derive the gauge transformation formula \eqref{PhysGaugeTrasf}, i.e. the expression for $\theta_{j} = \sigma_{j}^{*}\alpha$ in terms of $\theta_{i} = \sigma_{i}^{*}\alpha$.
Let $X_{m} \in T_{m}U$.	
\begin{equation}\label{GT1}
\begin{split}
\theta_{j}(X_{m}) =& (\sigma_{j}^{*}\alpha)(X_{m}) \\ 
=& [(\Phi_{i}^{-1} \circ \Phi_{i})^{*} \alpha](\sigma_{j,*}.X_{m}) \\ 
=& (\Phi_{i}^{*} \alpha) ((\Phi^{-1}_{i}\circ \sigma_{j})_{*}.X_{m}) 
.\end{split}
\end{equation}

But \[ \Phi_i\inv\rond\sigma_j(m)=
\Phi_i\inv\rond\Phi_j(m,e)=
(m,\delta(\sigma_i(m),\sigma_j(m))) ,\] 
and $\sigma_j(m)=\sigma_i(m)\cdot \gamma(m)_{|\epsilon(\sigma_i(m))}$ implies that 
\[ \delta(\sigma_{i}(m), \sigma_{j}(m)) 
= \delta(\sigma_{i}(m), \sigma_{i}(m)) \cdot \gamma(m)[\epsilon(\sigma_{i}(m))] 
= \gamma(m)[\epsilon_{i}(m)] .\]
To compute the derivative of $\Phi^{-1}_{i}\circ\sigma_{j}$ at a vector $X_{m}\in TM$, we pick a path $\tau\mapsto x(\tau)$ such that $\dot{x}(0)=X_{m}$.
Then \begin{align*}
(\Phi^{-1}_{i}\circ\sigma_{j})_{*} X_{m} 
=& (X_{m},\frac{d}{d\tau}\Big|_{0} \gamma(x(\tau))[\epsilon_{i}(x(\tau))]) \\
=& (X_{m},\gamma(m)_{*} \epsilon_{i,*}.X_{m} + (\gamma_{*}.X_{m})(\epsilon_{i}(m))) 
.\end{align*}
Resuming the computation \eqref{GT1}:
\begin{equation}
\label{YetAnotherNastyComp}
\begin{split}
\theta_{j}(X_{m}) 
=& (\Phi_{i}^{*}\alpha) ((\Phi^{-1}_{i}\circ \sigma_{j})_{*} X_{m}) \\ 
=& (\Phi_{i}^{*}\alpha) 
(X_{m},\gamma(m)_{*}.\epsilon_{i,*}.X_{m} + (\gamma_{*}.X_{m})(\epsilon_{i}(m)) \\
=& (\Phi_{i}^{*}\alpha) (X_{m},\gamma(m)_{*}. \epsilon_{i,*}.X_{m}) + (\Phi_{i}^{*}\alpha) (0_{m},(\gamma_{*}.X_{m})(\epsilon_{i}(m)) 
.\end{split}
\end{equation}
Notice that 
$(X_{m},\gamma(m)_{*}.\epsilon_{i,*}.X_{m}) 
\in T\epsilon_{i}^{*}\calU_{G}$ as
\begin{equation*}
s_{*}.\gamma(m)_{*}.\epsilon_{i,*}.X_{m} 
= \id_{*}.\epsilon_{i,*}.X_{m} 
=\epsilon_{i,*}.X_{m} 
\end{equation*}
and that the vector $(\gamma_{*}.X_{m})(\epsilon_{i}(m))$ is tangent to an $s$-fiber, therefore \eqref{YetAnotherNastyComp} is well defined.

The second contribution can be computed as follows:
\begin{equation*}
\begin{split}
(\Phi_{i}^{*} \alpha) 
(0_{m},(\gamma_{*}.X_{m})(\epsilon_{i}(m))) 
=& \alpha \Big(\frac{d}{d\tau} \Phi_{i}\big(m,\gamma\big(x(\tau)\big)(\epsilon_{i}(m))\Big) \\
=& \alpha \Big( \frac{d}{d\tau} \sigma_{i}(m) \cdot \gamma\big(x(\tau)\big)(\epsilon_{i}(m))\Big) 
.\end{split}
\end{equation*}
Using $x(0)=m$, $\gamma\big(x(0)\big)(\epsilon_{i}(m)) 
=\delta(\sigma_{i}(m),\sigma_{j}(m))$, 
the definition of $\alpha$ and the fact that 
$\left.\frac{d}{d\tau}\sigma_i(m)\cdot\gamma(x(\tau))[\epsilon_i(m)]\right|_0$ is vertical, one further obtains:
\begin{equation*}
\begin{split}
(\Phi_{i}^{*}\alpha) 
(0_{m},(\gamma_{*}.X_{m})(\epsilon_{i}(m)))
=&
\alpha\Big(\left.\frac{d}{d\tau}\sigma_j(m)\cdot
\delta(\sigma_j(m),\sigma_i(m))
\gamma(x(\tau))[\epsilon_i(m)]\right|_0\Big) \\ 
=&
\left.\frac{d}{d\tau}\delta(\sigma_i(m),\sigma_j(m))\inv\cdot\gamma(x(\tau))[\epsilon_i(m)] 
\right|_0 \\ 
=&
\left.\frac{d}{d\tau}
L_{\gamma(m)[\epsilon_i(m)]}\inv 
\big(\gamma(x(\tau))[\epsilon_i(m)]\big)\right|_0  
= \Theta_{(\gamma,\epsilon_{i})}.X_{m} 
.\end{split}
\end{equation*}
For the first contribution, we claim:
\[
(\Phi_{i}^{*} \alpha) (X_{m}, \gamma(m)_{*}. \epsilon_{i,*}.X_{m}) = \Ad_{\gamma^{-1}(m)}.\theta_{i}(X_{m}) \;.
\]
Indeed:
\begin{align*}
(\Phi_{i}^{*} \alpha) (X_{m}, \gamma(m)_{*}. \epsilon_{i,*}.X_{m}) 
=& \alpha\Big(\frac{d}{d\tau}\Big|_{0} \Phi_{i} \big(x(\tau), \gamma(m)(\epsilon_{i}(x(\tau))\big) \Big) \\
=& \alpha\Big( \frac{d}{d\tau}\Big|_{0} R_{\gamma(m)}\sigma_{i}(x(\tau))\Big) \;.
\end{align*}
On the other hand:
\begin{align*}
\Ad_{\gamma^{-1}(m)}.\theta_{i}(X_{m}) 
=& \Ad_{\gamma^{-1}(m)} \sigma_{i}^{*}\alpha(X_{m}) \\
=& \Ad_{\gamma^{-1}(m)} \alpha(\sigma_{i,*}.X_{m}) \\
=& \alpha( (R_{\gamma(m)})_{*}. \sigma_{i,*}.X_{m}) \;,
\end{align*}
which concludes the derivation of Equation~\eqref{PhysGaugeTrasf}.
\end{proof}

Consider now a local section $\sigma_{i}:U \to P_{|U}$, it defines uniquely a trivialization $(\epsilon_{i}^{*}\calU_{G}, \Phi_{i})$:
\[ \Phi_{i}: \epsilon_{i}^{*}\calU_{G} \to P_{|U}: (m,g) \mapsto \sigma_{i}(m)g .\]
Given $\gamma \in C^{\infty}(M,\bis(G))$, one can obtain a new section $\sigma_{j} := \sigma_{i} \star \gamma$:
\[ \sigma_{j}: m \mapsto \sigma_{i}(m) \star \gamma(m) ,\]
and the corresponding new trivialization $(\epsilon_{j}^{*}\calU_{G}, \Phi_{j})$. Each section defines a local connection form as pullback, $\theta_{i}:= \sigma_{i}^{*}\alpha$ and $\theta_{j}:= \sigma_{j}^{*}\alpha$. In this context, the gauge transformation formula is the expression for $\theta_{j}$ in terms of $\theta_{i}$. 

\begin{Cor}\label{thm.GaugeTransf}
With the notations above, 
the gauge transformation formula holds:
\begin{gather*}
\theta_{j}=\Ad_{\gamma\inv}\theta_{i}+\gamma^{-1}\gamma' \\ 
\Omega_{j}=\Ad_{\gamma\inv}\Omega_{i} \;.
\end{gather*}
\end{Cor}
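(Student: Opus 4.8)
The plan is to read Corollary~\ref{thm.GaugeTransf} as the local-trivialization form of Theorem~\ref{THM3.6}, under the dictionary $\sigma_i\leftrightarrow\sigma$ and $\sigma_j=\sigma_i\star\gamma\leftrightarrow\sigma_\gamma=\sigma\cdot\gamma$, where $\sigma_j(m)=\sigma_i(m)\cdot\gamma(m)[\epsilon_i(m)]$ is precisely the section produced by the pointwise bisection action. With this identification the local connection forms satisfy $\theta_i=\theta_\sigma$ and $\theta_j=\theta_{\sigma_\gamma}$, so the first formula $\theta_j=\Ad_{\gamma\inv}\theta_i+\gamma\inv\gamma'$ is literally Equation~\eqref{PhysGaugeTrasf} and needs no new computation: I would simply invoke Theorem~\ref{THM3.6}.

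The substantive part is the curvature identity $\Omega_j=\Ad_{\gamma\inv}\Omega_i$, which the displayed proof of Theorem~\ref{THM3.6} stopped short of establishing. I would prove it geometrically from two properties of the Ehresmann curvature $\Omega$ of a principal connection. First, $\Omega$ is \emph{horizontal}: by the defining formula~\eqref{eq:defehrcurv}, $\Omega(u,v)$ depends only on $\Hor(u)$ and $\Hor(v)$, hence vanishes whenever one argument is vertical. Second, $\Omega$ is \emph{equivariant} under bisections, $R_\gamma^*\Omega=\Ad_{\gamma\inv}\Omega$, mirroring the equivariance $R_\gamma^*\alpha=\Ad_{\gamma\inv}\alpha$ of the connection form. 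The latter follows from the principal-connection axiom $H_{p\cdot g}=(R_g)_*H_p$: since the $G$-action is fibrewise, $R_\gamma$ preserves $\ker\pi_*$, and by the axiom it preserves $H$, so it respects the splitting $TP=H\oplus\ker\pi_*$. Pushing the bracket in~\eqref{eq:defehrcurv} through the diffeomorphism $R_\gamma$ shows that $\Omega$, viewed as a $\ker\pi_*$-valued form, is $R_\gamma$-invariant; re-expressing it in the $\epsilon^*A$-valued picture, where the identification $\ker\pi_*\cong\epsilon^*A$ is itself twisted by $R_\gamma$, turns this invariance into the covariance $R_\gamma^*\Omega=\Ad_{\gamma\inv}\Omega$.

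Granting these, the computation is short. Writing $\sigma_j=R_\gamma\circ\sigma_i$ pointwise and splitting $\sigma_{j,*}X_m$ into a $\gamma$-frozen piece $(R_{\gamma(m)})_*\sigma_{i,*}X_m$ and a vertical $\gamma$-derivative piece—the same splitting that underlies the computation~\eqref{YetAnotherNastyComp}—horizontality of $\Omega$ discards the vertical piece outright. What remains is $\Omega\big((R_\gamma)_*\Hor\sigma_{i,*}X,\,(R_\gamma)_*\Hor\sigma_{i,*}Y\big)$, to which the equivariance $R_\gamma^*\Omega=\Ad_{\gamma\inv}\Omega$ applies; re-inserting the horizontal projections for free, once more by horizontality, gives $\Omega_j=\Ad_{\gamma\inv}\Omega_i$. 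Notably, the derivative-of-$\gamma$ terms that forced the Maurer--Cartan summand $\gamma\inv\gamma'$ into the connection-form law contribute nothing here, precisely because they are vertical.

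I expect the main obstacle to be the equivariance of the curvature, and within it the bookkeeping of the identification $\ker\pi_*\cong\epsilon^*A$ along a \emph{moving} bisection. The delicate verification is that $(R_\gamma)_*$ sends a fundamental vertical vector $\tfrac{d}{dt}\big|_0\,p\cdot g_t$ to the class of $\tfrac{d}{dt}\big|_0\,\gamma[\epsilon(p)]\inv\,g_t\,\gamma[t(g_t)]$ in $A$, i.e. to $\Ad_{\gamma\inv}$ acting on the original class, the moving base point $t(g_t)$ feeding into the right factor. Once the convention for $\Ad_\gamma$ is fixed to agree with the one already in force for $\alpha$ in~\eqref{PhysGaugeTrasf}, this identification closes the argument.
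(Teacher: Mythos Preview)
Your reading of the Corollary as a notational restatement of Theorem~\ref{THM3.6} under the dictionary $\sigma_i\leftrightarrow\sigma$, $\sigma_j\leftrightarrow\sigma_\gamma$ is exactly how the paper intends it: the Corollary carries no separate proof, and both displayed formulas are simply Equations~\eqref{PhysGaugeTrasf} and~\eqref{Eq3.5} transcribed into the $i,j$ notation. So for the paper, there is nothing ``substantive'' left to do once Theorem~\ref{THM3.6} is accepted.

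Where your proposal diverges is that you correctly notice the written proof of Theorem~\ref{THM3.6} establishes only~\eqref{PhysGaugeTrasf} and never returns to the curvature identity~\eqref{Eq3.5}; you then supply an independent argument for it. That argument---horizontality of the Ehresmann curvature plus the equivariance $R_{\gamma}^*\Omega=\Ad_{\gamma\inv}\Omega$ for a fixed bisection, applied to the same ``$\gamma$-frozen plus vertical'' splitting of $\sigma_{j,*}X_m$ used in~\eqref{YetAnotherNastyComp}---is correct and is the natural geometric proof. The delicate point you flag, namely tracking the identification $\ker\pi_*\cong\epsilon^*A$ through $R_\gamma$ so that the $\ker\pi_*$-valued invariance becomes $A$-valued $\Ad_{\gamma\inv}$-covariance, is genuine but routine once the same convention already fixed for $\alpha$ is used. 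In short: for the Corollary as such your proposal does strictly more than the paper, and what it adds is a clean filling of a gap the paper left in the proof of Theorem~\ref{THM3.6}.
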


\subsection{Trivial principal bundles}
We now consider trivial principal bundles. It is in this setting that our general construction reduces to the standard non linear gauge theories found in the literature \cite{bonechi.zabzine:2005a, Ikeda:1994, schaller.strobl:1994a, cattaneo.felder:2001c}. 

Let $f:M\to G_0$ and let $f^{*}\calU_{G}$ be the trivial $G$-bundle over $M$ w.r.t. $f$. As seen in Sections~\ref{connections} and \ref{subsec.Moerdijk}, we have the following

\begin{Prop}
\begin{enumerate}
\item A principal connection on $f^{*}\calU_{G}$ is determined by a bundle map
$\theta\diese: TM\to A$ over $f: M\to G_0$ satisfying the anchor condition 
$\rho\rond\theta\diese=f_*$.
\item Principal flat connections are in one-one correspondence with Lie algebroid morphisms: $TM\to A$ over $f:M\to G_0$.
\end{enumerate}
\end{Prop}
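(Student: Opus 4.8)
The plan is to specialize the correspondences of Sections~\ref{connections} and~\ref{subsec.Moerdijk} to the bundle $f^{*}\calU_{G}$, exploiting the fact that, unlike a general principal groupoid bundle, a trivial one carries a \emph{canonical global} section. Since
\[ f^{*}\calU_{G} = \{(m,g) \in M\times G \mid f(m) = s(g)\} \]
and $\epsilon(m,g) = t(g)$, the map $\sigma: M \to f^{*}\calU_{G}$, $m \mapsto (m,1_{f(m)})$, is a smooth section with $\epsilon_{\sigma} = \epsilon\circ\sigma = f$. Every point of $f^{*}\calU_{G}$ is uniquely of the form $\sigma(m)\cdot g$, and this is exactly what will let the equivariance of a principal connection recover it from its restriction along $\sigma$.

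For part~(1), I would begin with the characterization obtained in Section~\ref{connections}: a principal connection is the same datum as an equivariant bundle map $\alpha\diese: TP \to A$ over $\epsilon$ satisfying $\rho\circ\alpha\diese = \epsilon_{*}$. To such an $\alpha$ I associate $\theta\diese := \alpha\diese\circ\sigma_{*}: TM \to A$, a bundle map over $\epsilon\circ\sigma = f$, and pulling back the anchor condition along $\sigma$ gives immediately
\[ \rho\circ\theta\diese = \rho\circ\alpha\diese\circ\sigma_{*} = \epsilon_{*}\circ\sigma_{*} = (\epsilon\circ\sigma)_{*} = f_{*} .\]
The real content is that $\alpha \mapsto \theta := \sigma^{*}\alpha$ is a bijection onto bundle maps over $f$ obeying $\rho\circ\theta\diese = f_{*}$. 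Injectivity follows from equivariance together with the splitting $T_{\sigma(m)}P = \sigma_{*}(T_{m}M)\oplus\ker\pi_{*}$ (valid since $\pi\circ\sigma=\mathrm{id}$): the value of $\alpha$ at $\sigma(m)$ is fixed by $\theta_{m}$ on the first summand and by the canonical identification of $\ker\pi_{*}$ with $A$ on the second, and equivariance $H_{\sigma(m)\cdot g} = (R_{g})_{*}H_{\sigma(m)}$ then transports $H$, hence $\alpha$, to all of $P$. For surjectivity I would run this construction backwards: starting from $\theta\diese$ with $\rho\circ\theta\diese = f_{*}$, define $H$ at $\sigma(m)$ and extend it by the tangent $G$-action, the anchor condition being precisely what guarantees $\epsilon_{*}H = 0$ (the \moerdijk condition) so that this extension is well defined and equivariant.

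For part~(2), I would invoke the flatness criterion proved after Proposition~\ref{cd}: a principal connection is flat precisely when $\alpha\diese$ is a Lie algebroid morphism, equivalently when its Ehresmann curvature $\Omega$ vanishes. One direction is formal, since $\theta\diese = \alpha\diese\circ\sigma_{*}$ is a composite of Lie algebroid morphisms ($\sigma_{*}$ always being a morphism of tangent algebroids), so $\alpha\diese$ a morphism forces $\theta\diese$ a morphism. For the converse I would use $\Omega_{\sigma} = \sigma^{*}\Omega$ together with the equivariance relation $\Omega_{\sigma_{\gamma}} = \Ad_{\gamma\inv}\Omega_{\sigma}$ of Theorem~\ref{THM3.6}: since every point of $P$ lies on some section $\sigma_{\gamma}$, vanishing of $\Omega_{\sigma}$ propagates to $\Omega\equiv 0$. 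Moreover the pullback along $\sigma$ of the identity $(\Omega^{\#})^{*} = F_{\alpha}$ of Proposition~\ref{cd} shows that $\Omega_{\sigma} = 0$ is equivalent to $\theta\diese$ being a morphism, so flat principal connections correspond exactly to Lie algebroid morphisms $TM \to A$ over $f$.

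The step I expect to be the main obstacle is the reconstruction in part~(1): checking that an arbitrary $\theta\diese$ over $f$ with $\rho\circ\theta\diese = f_{*}$ really does extend to a globally well-defined, equivariant connection form on $f^{*}\calU_{G}$. The delicate points are that the equivariant extension of $H_{\sigma(m)}$ is independent of the arrow used to reach a given point and is compatible with the source/target constraints defining $f^{*}\calU_{G}$; once these are settled, both the anchor condition and equivariance hold by construction, and part~(2) follows routinely from the flatness criterion.
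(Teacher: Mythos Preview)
Your proposal is correct and follows exactly the route the paper intends: the paper does not give a separate proof but merely states that the proposition is ``as seen in Sections~\ref{connections} and~\ref{subsec.Moerdijk}'', and your argument is precisely the specialization of those sections to $f^{*}\calU_{G}$ via the canonical unit section $\sigma(m)=(m,1_{f(m)})$, with the bijection in~(1) coming from equivariance plus the \moerdijk condition, and~(2) from Proposition~\ref{cd} and its Corollary together with the curvature equivariance of Theorem~\ref{THM3.6}. You have simply supplied the details the paper leaves implicit.
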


%Let $\theta \in \Omega^{1}(M, \epsilon_{\sigma}^{*} A)$ be the connection $1$-form of a principal connection $H$ on the principal groupoid bundle $(G \circlearrowright_{\epsilon} P \overset{\pi}{\to} M)$, trivialized by the section $\sigma$.  One can think of  $\theta$ as a bundle map $TM \to A$ over $\epsilon^{\sigma}$. 

%The connection $H$ is called \emph{flat} if $\theta$ is a Lie algebroid morphism, where $TM$ is endowed with the tangent algebroid structure.  
%A vector bundle $B$ over $M$ is a Lie algebroid if and only if $\Omega(B, \dd_B)$ is a differential graded algebra (see \cite{xu:1999a} for a review and references therein). 

%The flatness condition for a connection $1$-form in terms of its transpose reads:

%\[
%\dd_{\mathrm{M}} \circ \theta^{*} - \theta^{*} \circ \dd_{A} = 0 \; .
%\]

This motivated the following:

\begin{Def} By the \emph{curvature} of a bundle map 
$\theta\diese:TM\to A$, we mean the graded map 
$F_{\theta}: \Omega^{\bullet}(A) \to \Omega^{\bullet + 1}(M)$ defined by:
\[ F_{\theta} = \dd_{\mathrm{M}} \circ (\theta^{\sharp})^{*} - (\theta^{\sharp})^{*} \circ \dd_{A}  \; .\]
\end{Def}

The curvature therefore measures the failure of $\theta\diese$ being a Lie algebroid morphism. 
By Proposition~\ref{cd} and Equation~\eqref{Eq3.5}, if $\theta^{\sharp}$ satisfies the anchor condition 
$ f_*=\rho \circ \theta^\sharp$,
then $\Ad_{\gamma^{-1}}F_{\theta}= F_{\theta_{\gamma}}$.

In fact, we can prove that this relation always holds without any assumption on the anchor.

\begin{Thm}\label{thm.GaugeInvariance}
The curvature $F_{\theta}$ is always gauge invariant, i.e. 
\begin{equation}\label{GaugeInvarianceCurvature}
\Ad_{\gamma\inv} F_{\theta} = F_{\theta_{\gamma}} 
\; .\end{equation}
\end{Thm}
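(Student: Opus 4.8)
The gauge-covariance \eqref{GaugeInvarianceCurvature} was already obtained in the special case $\rho\rond\theta^\sharp=f_*$ by combining Proposition~\ref{cd} with \eqref{Eq3.5}, but that route is geometric: it passes through the Ehresmann curvature $\Omega$, which only represents $F_\theta$ when the \moerdijk (anchor) condition holds. Since here $F_\theta$ is a purely algebraic object attached to an arbitrary bundle map $\theta^\sharp\colon TM\to A$, the plan is to verify the operator identity \eqref{GaugeInvarianceCurvature} directly. The organizing principle is that $F_\theta$ is a degree $+1$ derivation over the algebra homomorphism $(\theta^\sharp)^*\colon\Omega^\bullet(A)\to\Omega^\bullet(M)$: because $\dd_M$, $\dd_A$ are derivations and $(\theta^\sharp)^*$ is multiplicative, one has
\[ F_\theta(\omega\wedge\eta)=F_\theta\omega\wedge(\theta^\sharp)^*\eta+(-1)^{|\omega|}(\theta^\sharp)^*\omega\wedge F_\theta\eta . \]
The same holds for $F_{\theta_\gamma}$ over $(\theta_\gamma^\sharp)^*$, and for $\Ad_{\gamma\inv}F_\theta$ once the adjoint is read as precomposition with the automorphism it induces on $\Omega^\bullet(A)$. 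As $\Omega^\bullet(A)=\sections{\wedge^\bullet A^*}$ is generated by $\Omega^0(A)=C^\infty(G_0)$ and $\Omega^1(A)=\sections{A^*}$, it suffices to verify \eqref{GaugeInvarianceCurvature} on functions and one-forms --- while keeping track of the fact that the two sides are a priori derivations over different homomorphisms, the discrepancy being governed by the Maurer--Cartan one-form $\gamma\inv\gamma'$ of \eqref{PhysGaugeTrasf}.

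On $\Omega^0(A)$ I would compute both members explicitly. For $h\in C^\infty(G_0)$ the curvature $F_\theta h$ is the one-form $X_m\mapsto\big(f_*X_m-\rho(\theta^\sharp X_m)\big)h$, i.e. exactly the anchor defect. The content of dropping the anchor hypothesis is that this defect need not vanish; what must be shown is that the defect of $\theta_\gamma$ coincides, after the adjoint twist, with that of $\theta$. This follows because $\Ad_{\gamma\inv}$ is a Lie algebroid automorphism, hence intertwines the two anchors and covers the base diffeomorphism of $G_0$ relating $\epsilon_\sigma$ to $\epsilon_{\sigma,\gamma}$, while the Maurer--Cartan term contributes to $\rho\rond\theta_\gamma^\sharp$ precisely the amount needed for the two defects to match.

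The heart of the argument is the degree-one check. Here I would expand $F_{\theta_\gamma}\lambda$ for $\lambda\in\sections{A^*}$ using $\theta_\gamma^\sharp=\Ad_{\gamma\inv}\theta^\sharp+\Theta_{(\gamma,\epsilon_\sigma)}$, the Cartan formula for $\dd_A\lambda$ in terms of the bracket and anchor of $A$, and the fact that $\Ad_{\gamma\inv}$ preserves both, so that it commutes with $\dd_A$. Inserting the sum into the bilinear expression $\dd_A\lambda(\theta_\gamma^\sharp\,\cdot\,,\theta_\gamma^\sharp\,\cdot\,)$ produces a pure adjoint term that reassembles into $\Ad_{\gamma\inv}F_\theta\,\lambda$, together with cross terms mixing the $\Ad$-part and $\Theta_{(\gamma,\epsilon_\sigma)}$, and a term purely in $\Theta_{(\gamma,\epsilon_\sigma)}$. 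The latter two families must be absorbed using the structure equation satisfied by $\gamma\inv\gamma'$ --- equivalently, the flatness of the pure-gauge field $\Theta_{(\gamma,\epsilon_\sigma)}$, namely $F_{\Theta_{(\gamma,\epsilon_\sigma)}}=0$ on one-forms --- with the anchor-defect terms from the previous step entering exactly so as to make the cancellation go through without assuming $\rho\rond\theta^\sharp=f_*$.

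I expect the degree-one bookkeeping to be the main obstacle, for two reasons. First, since the two sides are derivations over the distinct homomorphisms $(\theta_\gamma^\sharp)^*$ and $(\theta^\sharp)^*\circ\Ad_{\gamma\inv}^*$, agreement on generators does not by itself close the argument: one must check that the mismatch terms, all of which are controlled by $\gamma\inv\gamma'$, cancel against the Maurer--Cartan equation when the identity is propagated to products. Second, differentiating the functions $\langle\lambda,\theta_\gamma^\sharp Y\rangle$ with $\dd_M$ couples the variation of $\lambda$ along the moving base map $\epsilon_{\sigma,\gamma}$ to the variation of the bisection $\gamma$, and untangling these is where the Maurer--Cartan reading of $\gamma\inv\gamma'$ and the equivariance of $\alpha$ from Theorem~\ref{THM3.6} are indispensable. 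Once the generator identities and their propagation to products are secured, the graded Leibniz rules extend \eqref{GaugeInvarianceCurvature} to all of $\Omega^\bullet(A)$.
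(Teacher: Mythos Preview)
Your plan coincides with the paper's proof: reduce to generators in degrees $0$ and $1$ via the derivation property of $F_\theta$, handle degree $0$ by matching the anchor defects of $\theta$ and $\theta_\gamma$, and in degree $1$ reduce the difference $(F_{\theta_\gamma}-\Ad_{\gamma^{-1}}F_\theta)\lambda$ to $\Theta_{(\gamma,\epsilon_\sigma)}^*(\dd_A\lambda)-\dd_M(\Theta_{(\gamma,\epsilon_\sigma)}^*\lambda)$, which vanishes by the Maurer--Cartan equation for the pure-gauge field. Your worry about the two sides being derivations over distinct pullback homomorphisms is in fact more scrupulous than the paper, which simply asserts that checking generators suffices and then carries out those checks directly.
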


\begin{proof}
As an algebra, $\Omega^{\bullet}(A)$ is generated by 
$\Omega^{\leq 1}(A) = C^{\infty}(G_{0}) \oplus \Omega^{1}(A)$. Since $F_{\theta}$ is a derivation, 
it suffice to check Equation~\eqref{GaugeInvarianceCurvature} on the generators. 

We consider degree $0$ first. 
Let $f \in C^{\infty}(G_{0})$ and $X_{m} \in T_{m}M$.

First we compute $\braket{F_{\gamma}. f}{X_{m}}$.
\begin{align*}
\braket{F_{\gamma}. f}{X_{m}} 
=& \braket{\dd_{M}\theta_{\gamma}^{*}. f}{X_{m}}
-\braket{\theta_{\gamma}^{*} \dd_{A} f}{X_{m}} \\
=& \braket{\epsilon^{*}_{\gamma} \dd f}{X_{m}} 
-\rho(\theta_{\gamma}. X_{m}).f \\
=& X_{m}(f \circ \epsilon_{\sigma,\gamma}) 
-\rho(\Ad_{\gamma\inv(m)}.X_{m}).f 
-\rho( (L^{-1}_{\gamma(m)})_{*} \gamma_* . X_{m}|_{\epsilon_{\sigma,\gamma}(m)}).f 
\;.\end{align*}

On the other hand $\braket{\Ad_{\gamma^{-1}}F. f}{X_{m}}$ gives:
\begin{align*}
\braket{\Ad_{\gamma^{-1}}F. f}{X_{m}} 
=& \braket{F(\Ad_{\gamma^{-1}(m)}^{*} f)}{X_{m}} \\
=& \braket{\dd_{M}\theta^{*}(\Ad_{\gamma^{-1}(m)}^{*}.f)}{X_{m}} 
-\braket{\theta^{*}\dd_{A}\Ad^{*}_{\gamma^{-1}(m)} .f}{X_{m}} \\
=& X_m(f\rond\Ad_{\gamma\inv(m)}\rond\epsilon_\sigma) 
-\rho(\theta X_m)\big(f\rond\Ad_{\gamma\inv(m)}\big) \\ 
=& X_m((\Ad_{\gamma\inv(m)}\epsilon_\sigma)^*f) 
-\rho(\Ad_{\gamma\inv(m)}\theta.X_m).f 
\; .\end{align*}
Therefore:
\begin{multline*}
\braket{(F_{\gamma} - \Ad_{\gamma^{-1}}F). f}{X_{m}} = 
X_{m}(f \circ \epsilon_{\sigma,\gamma}) 
-X_{m}((\Ad_{\gamma^{-1}(m)}\epsilon_{\sigma})^{*}f) \\ 
-\rho( (L^{-1}_{\gamma(m)})_{*} \gamma_* . X_{m}|_{\epsilon_{\sigma,\gamma}(m)}).f 
\; .\end{multline*} 
Notice that the functions $\epsilon_{\sigma,\gamma}$ and $\Ad_{\gamma^{-1}(m)}\epsilon_{\sigma}$ are different:
\begin{gather*}
\epsilon_{\sigma,\gamma}: x \mapsto \Ad_{\gamma^{-1}(x)}\epsilon_{\sigma}(x) \, ,\\
\Ad_{\gamma^{-1}(m)}\epsilon_{\sigma}: x \mapsto \Ad_{\gamma^{-1}(m)}\epsilon_{\sigma}(x) \,.
\end{gather*}
Take a path $\tau\mapsto x_{\tau}$ in $M$ such that 
$\frac{d x_{\tau}}{d\tau}|_{0}= X_{m}$, then:
\begin{gather*}
X_{m}(f \circ \epsilon_{\sigma,\gamma})
=\frac{d}{d\tau}f(\Ad_{\gamma^{-1}(m)}
\epsilon_{\sigma}(x_{\tau}))|_{0}+\frac{d}{d\tau} f(\Ad_{\gamma^{-1}(x_{\tau})}\epsilon_{\sigma}(m))|_{0} \, ,\\
X_{m}((\Ad_{\gamma^{-1}(m)}\epsilon_{\sigma})^{*}f) 
=\frac{d}{d\tau}f(\Ad_{\gamma^{-1}(m)}
\epsilon_{\sigma}(x_{\tau}))|_{0} \;.
\end{gather*}
Hence:
\[ \braket{(F_{\gamma}-\Ad_{\gamma^-1}F).f}{X_{m}} = 
\frac{d}{d\tau} f(\Ad_{\gamma^{-1}(x_{\tau})}\epsilon_{\sigma}(m))|_{0}
-\rho( (L^{-1}_{\gamma(m)})_{*} \gamma_*. X_{m}|_{\epsilon_{\sigma,\gamma}(m)}).f \; .\]
By definition of the adjoint action on the base $G_{0}$:
\begin{align*}
\Ad_{\gamma^{-1}(x_{\tau})}\epsilon_{\sigma}(m) 
=& \target(\epsilon_{\sigma}(m) \cdot \gamma(x_{\tau})) \\
=& \target\big( R_{\gamma(x_{\tau})}(\epsilon_{\sigma}(m)) \big) \\
=& \target\big( \gamma(x_{\tau})(\epsilon_{\sigma}(m)) \big) \;.
\end{align*}
On the other hand:
\begin{align*}
\rho( (L^{-1}_{\gamma(m)})_{*} \gamma_* . X_{m}|_{\epsilon_{\sigma,\gamma}(m)}) 
=& \frac{d}{d\tau}\Big|_{0} (\target\circ L^{-1}_{\gamma(m)})\big(\gamma(x_{\tau})
(\epsilon_{\sigma}(m))\big) \\
=& \frac{d}{d\tau}\Big|_{0} \target \big(\gamma(x_{\tau})(\epsilon_{\sigma}(m))\big) \;,
\end{align*}
thus:
\[ \rho( (L^{-1}_{\gamma(m)})_{*} \gamma_* . X_{m}|_{\epsilon_{\sigma,\gamma}(m)}).f = \frac{d}{d\tau}\Big|_{0} f(\target \big(\gamma(x_{\tau}) (\epsilon_{\sigma}(m))\big)) = \frac{d}{d\tau}\Big|_{0} f(\Ad_{\gamma^{-1}(x_{\tau})}\epsilon_{\sigma}(m)) 
\; .\]
Since $X_{m}$ is arbitrary, we conclude that:
\[ (F_{\gamma} - \Ad_{\gamma^{-1}}F). f = 0 \; .\]

To check gauge invariance in degree $1$ we choose 
$\lambda \in \Omega^{1}A$. The following is just an unravelling of definitions: 
\begin{align*}
(F_{\gamma}-\Ad_{\gamma^{-1}}F)\lambda 
=& -(\Ad_{\gamma\inv}\theta^{*})^*(\dd_{A}\lambda) 
-\Theta_{(\gamma,\epsilon_{\sigma})}(\dd_{A}\lambda) 
+\dd_{M}((\Ad_{\gamma\inv}\theta)^{*}.\lambda) \\
& +\dd_{M}(\Theta_{(\gamma,\epsilon_{\sigma})}^*.\lambda)   
+\theta^{*}\circ \dd_{A}(\Ad^{*}_{\gamma^{-1}}.\lambda) 
-\dd_{M}\circ\theta^{*}(\Ad^{*}_{\gamma^{-1}}.\lambda) \\ 
=& \Theta^{*}_{(\gamma,\epsilon_{\sigma})}(\dd_{A}\lambda) 
-\dd_{M}(\Theta^{*}_{(\gamma,\epsilon_{\sigma})}.\lambda) 
\;. \end{align*}
We used the fact the $\Ad_{\gamma^{-1}(m)}$ is an algebroid morphism for all $m$ in $M$. Recall that  $\Theta_{(\gamma,\epsilon_{\sigma})}$ is the pullback of the Maurer Cartan form on $\bis(G)$ via $\gamma:M\to\bis(G)$ evaluated along $\epsilon_{\sigma,\gamma}:M\to G_{0}$. We now show that 
\begin{equation}\label{EQ3.12}
\braket{\Theta^{*}_{(\gamma,\epsilon_{\sigma})}(\dd_{A}\lambda)
-\dd_{M}(\Theta^{*}_{(\gamma,\epsilon_{\sigma})}.\lambda)}
{X_{m}\wedge Y_{m}} = 0 
,\end{equation}
for all $X,Y\in\mathfrak{X}(M)$ and $m\in M$.
Using the definition of $\Theta_{(\gamma,\epsilon_{\sigma})}$:
\[ \Theta_{(\gamma,\epsilon_{\sigma})}: X_{m} \mapsto (L^{-1}_{\gamma(m)})^{*}\gamma_{*}. X_{m}|_{\epsilon_{\sigma,\gamma}(m)} \; ,\]
we compute separately the two terms.
The first one yields:
\begin{multline*}
\braket{\Theta^{*}_{(\gamma,\epsilon_{\sigma})}(\dd_{A} \lambda)}{X_{m}\wedge Y_{m}} = \\ \rho(\Theta_{(\gamma,\epsilon_{\sigma})}.X_{m})\braket{\lambda}{(L^{-1}_{\gamma(m)})^{*}\gamma_{*}.Y_{m})} 
-\rho(\Theta_{(\gamma,\epsilon_{\sigma})}.Y_{m})\braket{\lambda}{(L^{-1}_{\gamma(m)})^{*}\gamma_{*}.X_{m})} \\
-\braket{\lambda}{\Lie{(L^{-1}_{\gamma(m)})^{*}\gamma_{*}.X_{m}}{(L^{-1}_{\gamma(m)})^{*}\gamma_{*}.Y_{m}}}_{\epsilon_{\sigma,\gamma}(m)}.\end{multline*}
The second term reads:
\begin{multline*}
\braket{\dd_{M}(\Theta^{*}_{(\gamma,\epsilon_{\sigma})}.\lambda)}
{X_{m}\wedge Y_{m}}
=\braket{\dd_{M}(\Theta^{*}_{(\gamma,\epsilon_{\sigma})}.\lambda)}{X\wedge Y}_{m}
=X_{m}\braket{\Theta^{*}_{(\gamma,\epsilon_{\sigma})}.\lambda}{Y} \\ 
-Y_{m}\braket{\Theta^{*}_{(\gamma,\epsilon_{\sigma})}.\lambda}{X}
-\braket{\lambda}{(L^{-1}_{\gamma(m)})^{*}\gamma_{*}.\Lie{X}{Y}_{m}}_{\epsilon_{\sigma,\gamma}(m)} 
\; .\end{multline*}
Since solutions to the Maurer Cartan equations are preserved by pullbacks, for every $m\in M$ we have:
\begin{multline*}
(L^{-1}_{\gamma(m)})^{*}\gamma_{*}.\Lie{X}{Y}_{m} 
-\Lie{(L^{-1}_{\gamma(m)})^{*}\gamma_{*}.X_{m}}
{(L^{-1}_{\gamma(m)})^{*}\gamma_{*}.Y_{m}} \\
= X_{m}((L^{-1}_{\gamma})^{*}\gamma_{*}.Y) 
-Y_{m}((L^{-1}_{\gamma})^{*}\gamma_{*}.X) 
\; .\end{multline*}
Therefore, the L.H.S. of Equation~\eqref{EQ3.12} is the sum of the two triples 
\[ \rho(\Theta_{(\gamma,\epsilon_{\sigma})}.X_{m})
\braket{\lambda}{(L^{-1}_{\gamma(m)})^{*}\gamma_{*}.Y_{m})}
- X_{m}\braket{\Theta^{*}_{(\gamma,\epsilon_{\sigma})}.\lambda}{Y} 
+ \braket{\lambda}{X((L^{-1}_{\gamma})^{*} \gamma_{*}.Y)}_{\epsilon_{\sigma,\gamma}(m)} \]
and
\[ \rho(\Theta_{(\gamma,\epsilon_{\sigma})}.Y_{m}) \braket{\lambda}{(L^{-1}_{\gamma(m)})^{*}\gamma_{*}.X_{m})}
- Y_{m} \braket{\Theta^{*}_{(\gamma,\epsilon_{\sigma})}.\lambda}{X} 
+ \braket{\lambda}{Y((L^{-1}_{\gamma})^{*} \gamma_{*}.X)}_{\epsilon_{\sigma,\gamma}(m)} .\]
In the first triple, the first term is the derivative with respect to 
\[ \rho(\Theta_{(\gamma,\epsilon_{\sigma})}.X_{m}) =\rho((L^{-1}_{\gamma(m)})^{*}\gamma_{*}.X_{m}|_{\epsilon_{\sigma,\gamma}(m)}) \] 
of the function
\begin{equation*}
G_{0} \in g \mapsto \braket{\lambda}{(L^{-1}_{\gamma(m)})^{*}\gamma_{*}.Y_{m})}_{g}\;,
\end{equation*}
the second term is the derivative with respect to $X_{m}$ of the function
\begin{equation*}
M \in x \mapsto \braket{\lambda}{(L^{-1}_{\gamma(x)})^{*}\gamma_{*}.Y_{x})}_{\epsilon_{\sigma,\gamma}(x)}\;
\end{equation*}	
and the third term is the derivative with respect to $X_{m}$ of the function:
\begin{equation*}
M \in x \mapsto \braket{\lambda}{(L^{-1}_{\gamma(x)})^{*}\gamma_{*}.Y_{x})}_{\epsilon_{\sigma,\gamma}(m)}\;.
\end{equation*}
Therefore they cancel by Leibniz rule. The same argument applies to the second triple and the theorem is proven.
\end{proof}

By $\mathcal{A}_0$, we denote the space of Lie algebroid morphisms $TM\to A$ on which $\mathcal{G}=\cinf{M,\bis(G)}$ acts naturally.
Therefore the \emph{moduli space of flat connections} $\calA_{0}/\calG$ is well defined.
This construction was studied in detail in \cite{bonechi.zabzine.2007}. 

A special case of this construction gives a gauge 
theoretical interpretation of the topological groupoid $G(A)$, which integrates $A$. In that case, $M$ is the interval $I=[0,1]$. 
%Let $\calA$ be the space of principal $G$-connections on $P$ and $\calA_{0}$ the subspace of flat connections. Since the curvature is gauge invariant $\calA_{0}$ is stable under $\calG$ (i.e. $\calG \cdot \calA_{0} \subset \calA_{0}$), 
%A special case of this construction gives a gauge theoretical interpretation of the \emph{integrating topological groupoid} $G(A)$ of the Lie algebroid $A$. 
The original approach involves the Weinstein groupoid of $A$ which is the space of algebroid morphisms from $TI$ to $A$ modulo algebroid homotopies (see \cite{crainic.fernandes:2003} for details). It is a proposition in \cite{bojowald.kotov.strobl:2005a} that two Lie algebroid morphisms are homotopic if and only if they are connected by the flow of an infinitesimal gauge transformation. Therefore the Weinstein groupoid of $A$ can be interpreted as the moduli space of flat connections on the interval $I$.

%We chose the gauge group $\calG$ to be $C^{\infty}(M, \bis(G))$ because of its flexibility, in particular for allowing to write a gauge transformation in the ``familiar'' form \eqref{eq.GaugeTransf}
%\[ \theta_{j}(X_{m}) = \Ad(\gamma^{-1}(m)).\theta_{i}(X_{m}) + \Theta_{(\gamma, \epsilon_{i})}.X_{m} \; .\]
%On the other hand the rationale, upon which the derivation of such formula is based, is the change of trivialization in the principal groupoid bundle. This approach would suggest the use of the \emph{gauge groupoid} $C^{\infty}(M, G)$, whose elements are the transition functions of the principal groupoid bundle. In the latter case no explicit formula directly generalizing the principal group bundle case can be written. 

%The in-between alternative is the groupoid $C^{\infty}(M, J^{1}(G))$ of maps from $M$ to the first jet groupoid. The geometry related to this approach will be studied elsewhere. 

%%%%%%%%%%%%%%%%%%%%%%%%%
%%%%%%%%%%%%%%%%%%%%%%%%%
%%%%%%%%%%%%%%%%%%%%%%%%%

%%%%%%%%%%%%%%%%%%%%%%%%%
%%%%%%%%%%%%%%%%%%%%%%%%%
%%%%%%%%%%%%%%%%%%%%%%%%%

%%%%%%%%%%%%%%%%%%%%%%%%%
%%%%%%%%%%%%%%%%%%%%%%%%%

\bibliographystyle{amsalpha}
\bibliography{MyBib}

\providecommand{\bysame}{\leavevmode\hbox to3em{\hrulefill}\thinspace}
\providecommand{\MR}{\relax\ifhmode\unskip\space\fi MR }
% \MRhref is called by the amsart/book/proc definition of \MR.
\providecommand{\MRhref}[2]{%
  \href{http://www.ams.org/mathscinet-getitem?mr=#1}{#2}
}
\providecommand{\href}[2]{#2}
\begin{thebibliography}{CdSW99}

\bibitem[BKS05]{bojowald.kotov.strobl:2005a}
M.~Bojowald, A.~Kotov, and T.~Strobl, \emph{Lie algebroid morphisms, {P}oisson
  sigma models, and off-shell closed gauge symmetries}, J. Geom. Phys.
  \textbf{54} (2005), 400--426.

\bibitem[BZ05]{bonechi.zabzine:2005a}
F.~Bonechi and M.~Zabzine, \emph{Poisson sigma model over group manifolds}, J.
  Geom. Phys. \textbf{54} (2005), 173--196.

\bibitem[BZ07]{bonechi.zabzine.2007}
Francesco Bonechi and Maxim Zabzine, \emph{Lie algebroids, {L}ie groupoids and
  {TFT}}, J. Geom. Phys. \textbf{57} (2007), no.~3, 731--744.

\bibitem[CdSW99]{cannasdasilva.weinstein:1999a}
A.~Cannas~da Silva and A.~Weinstein, \emph{Geometric models for noncommutative
  algebras}, Berkeley Mathematics Lecture Notes, AMS, 1999.

\bibitem[CF00]{cattaneo.felder:2000a}
A.~Cattaneo and G.~Felder, \emph{A path integral approach to the kontsevich
  quantization formula}, Commun. Math. Phys. \textbf{212} (2000), 591--611.

\bibitem[CF01]{cattaneo.felder:2001c}
A.~S. Cattaneo and G.~Fedler, \emph{Poisson sigma models and symplectic
  groupoids}, Quantization of singular symplectic quotients (N.~P. Landsman,
  M.~Pflaum, and M.~Schlichenmaier, eds.), Birkh{\"a}user, Basel, Boston,
  Berlin, 2001, pp.~61--93.

\bibitem[CF03]{crainic.fernandes:2003}
Marius Crainic and Rui~Loja Fernandes, \emph{Integrability of {L}ie brackets},
  Ann. of Math. (2) \textbf{157} (2003), no.~2, 575--620.

\bibitem[Ike94]{Ikeda:1994}
Noriaki Ikeda, \emph{Two-dimensional gravity and nonlinear gauge theory}, Ann.
  of Phys. \textbf{235} (1994), 435--464.

\bibitem[Kon03]{kontsevich:2003a}
M.~Kontsevich, \emph{Deformation quantization of {P}oisson manifolds}, Lett.
  Math. Phys. \textbf{66} (2003), 157--216.

\bibitem[Mac05]{mackenzie:2005a}
K.~C.~H. Mackenzie, \emph{General theory of {L}ie groupoids and {L}ie
  algebroids}, London Mathematical Society Lecture Note Series, vol. 213,
  Cambridge University Press, Cambridge, UK, 2005.

\bibitem[MM03]{moerdijk.mrcun:2003a}
I.~Moerdijk and J.~Mr{\v{c}}un, \emph{Introduction to foliations and {L}ie
  groupoids}, Cambridge studies in advanced mathematics, no.~91, Cambridge
  University Press, Cambridge, UK, 2003.

\bibitem[Ros04]{rossi:2004b}
C.~A. Rossi, \emph{The division map of principal bundles with groupoid
  structure and generalized gauge transformations}, arXiv:math/0401182v2, 2004.

\bibitem[SS94]{schaller.strobl:1994a}
P.~Schaller and T.~Strobl, \emph{Poisson structure induced (topological) field
  theories}, Mod. Phys. Lett. A \textbf{9} (1994), no.~33, 3129--3136.

\bibitem[Str04]{Strobl:2004}
Thomas Strobl, \emph{Algebroid {Y}ang-{M}ills theories}, Phys. Rev. Lett.
  \textbf{93} (2004), no.~21, 211601--4.

\end{thebibliography}

\end{document}